\documentclass[11pt]{amsart}
\usepackage{graphicx,mathrsfs,setspace,fancyhdr,amsmath,fullpage,amssymb}
\usepackage[margin=2.5cm]{geometry}
\usepackage[alphabetic]{amsrefs}
\usepackage[colorlinks=true,linkcolor=blue,urlcolor=blue, citecolor=blue]{hyperref} 
\usepackage[all]{xy}
\usepackage{framed}
\usepackage{tikz}

 % ----------------------------------------------------------------
 \vfuzz2pt % Don't report over-full v-boxes if over-edge is small
 \hfuzz2pt % Don't report over-full h-boxes if over-edge is small
% THEOREMS -------------------------------------------------------
\newtheorem{thm}{Theorem}[section]
\newtheorem*{thma}{Theorem A}
\newtheorem*{thm*}{Theorem}

\newtheorem*{thmb}{Theorem B}

\newtheorem*{corc}{Corollary C}
\newtheorem*{conj*}{Conjecture}
\newtheorem*{cor*}{Corollary}
\newtheorem{cor}[thm]{Corollary}
\newtheorem{conj}[thm]{Conjecture}
\newtheorem{lem}[thm]{Lemma}
\newtheorem{theorem}[thm]{Theorem}
\newtheorem{prop}[thm]{Proposition}

\theoremstyle{definition}
\newtheorem{defn}[thm]{Definition}

\theoremstyle{remark}
\newtheorem{rem}[thm]{Remark}
\newtheorem{rmk}[thm]{Remark}

\newtheorem{eg}[thm]{Example}
\numberwithin{equation}{section}
% MATH -----------------------------------------------------------

\makeatletter
\newcommand\xleftrightarrow[2][]{\ext@arrow 0099{\longleftrightarrowfill@}{#1}{#2}}
\def\longleftrightarrowfill@{\arrowfill@\leftarrow\relbar\rightarrow}
\makeatother

\ProvidesPackage{icsinclude}

%%% Typeface shortcuts
\def\mc{\mathscr}

%%% Arrows

%%% Symbol redefinitions

%%% Hom-like things

%%% Categories

%%% Functors

\DeclareMathOperator{\Sym}{Sym}

\DeclareMathOperator{\Pic}{Pic}

%%% Maps and constructions

%%% linear algebraic stuff

\def\dim{\text{dim}}

%%% Quantities

%%% Cohomology

%%% Group theory

\def\pr{\text{pr}}

%%% ------------------------------------######################

%%% ------------------------------------######################

%%% mathbb

\def\N{\mathbb{N}}
\def\P{\mathbb{P}}

\def\Z{\mathbb{Z}}

%%% mathcal

\def\cA{\mc{A}}

\def\cE{\mc{E}}
\def\cF{\mc{F}}
\def\cG{\mc{G}}

\def\cL{\mc{L}}
\def\cM{\mc{M}}

\def\cO{\mc{O}}

\def\cQ{\mc{Q}}

\def\cU{\mc{U}}

%%% mathbf

%%% tilde

%%% hat

%%% overline

%%% sf

%%% frak

%%% check

%%% roman

%\pagestyle{fancy}
\lhead{}
\chead{}
\rhead{}
\lfoot{}
\cfoot{}
\rfoot{}

\newcommand{\PP}{\ensuremath{\mathbb{P}}}

\DeclareMathOperator{\Alb}{Alb}
\DeclareMathOperator{\alb}{alb}

\DeclareMathOperator{\ev}{ev}
\DeclareMathOperator{\ch}{ch}
\DeclareMathOperator{\td}{td}
\DeclareMathOperator{\NS}{NS}

 \newcommand{\equ}{\ensuremath{\,=\,}}
\newcommand{\dgeq}{\ensuremath{\,\geq\,}}
\newcommand{\dleq}{\ensuremath{\,\leq\,}}
\newcommand{\deq}{\ensuremath{\stackrel{\textrm{def}}{=}}}

\newcommand{\st}[1]{\ensuremath{\left\{ #1 \right\}   }}

\newcommand{\ssh}{simple semihomogeneous\ }

\newcommand{\scal}[1]{\ensuremath{\langle #1 \rangle}}

\renewcommand{\dim}{\ensuremath{\textrm{dim}\,}}
\newcommand{\rk}{\ensuremath{\textrm{rk}\,}}
\DeclareMathOperator{\Nef}{Nef}

\begin{document}

\title{Effective global generation on varieties with numerically trivial canonical class}
\author{Alex K\"{u}ronya}
\address{Alex K\"uronya, Institut f\"ur Mathematik, Goethe-Universit\"at Frankfurt, Robert-Mayer-Str. 6-10., D-60325 Frankfurt am Main, Germany}
\address{BME TTK Matematika Int\'ezet Algebra Tansz\'ek, Egry J\'ozsef u. 1., H-1111 Budapest, Hungary}
\email{\tt kuronya@math.uni-frankfurt.de}
\author{Yusuf Mustopa}
\address{Yusuf Mustopa, University of Massachusetts Boston, Department of Mathematics, Wheatley Hall, 100 William T Morrissey Blvd, Boston, MA 02125, USA}
\address{Max-Planck-Institut f\"{u}r Mathematik, Vivatsgasse 7, 53111, Bonn, Germany}
\email{Yusuf.Mustopa@umb.edu}

\begin{abstract}  
We prove a Fujita-type theorem for varieties with numerically trivial canonical bundle using properties of semihomogeneous bundles on abelian varieties.  We combine our results with work of Riess on compact hyperk\"{a}hler manifolds
and work of Mukai, Pareschi and Yoshioka to obtain effective global generation statements for certain moduli spaces of sheaves on abelian surfaces.  Among these is the statment that if $\cL$ is an ample line bundle on the Hilbert square $S^{[2]}$ of an abelian surface $S,$ then $\cL^{\otimes m}$ is globally generated for $m \geq 3.$
\end{abstract}

\thanks{\emph{2010 Mathematics Subject Classification:} 	14C05, 14C20, 14D06, 14K05, 14K20, 32H99, 53C55}

\maketitle

\section*{Introduction}

Fujita's conjectures on global generation and very ampleness have long been a strong driving force in birational  geometry. 
The conjectures say that for a polarized complex manifold $(X,\cL)$, the line bundle $\omega_X\otimes\cL^{\otimes m}$ should be basepoint-free if $m\geq \dim(X)+1$, and very ample whenever $m\geq \dim(X)+2$. The statement  is a classical consequence of Riemann--Roch if $\dim(X)=1$, and has been verified for $\dim(X)\leq 5$ in the case of global generation \cites{Reider,EL93,Kaw,Helmke,YZ}. There are further  partial results for the case of very ampleness when $X$ is a Calabi-Yau threefold \cite{GP} and more generally when $X$ has a nef canonical bundle \cite{MR}.  For arbitrary dimension, there exist strong global generation statements due to Angehrn and Siu \cite{AS} and Heier \cite{Heier}, whose bounds are nevertheless not linear in $\dim(X)$. 

While sharp for hyperplane bundles on projective spaces for instance, Fujita's conjecture is very far from the truth in general.  One notable class where global generation holds for much smaller powers is that of abelian varieties. Indeed, for a polarized abelian variety $(X,\cL)$ of any dimension,  the line bundle $\cL^{\otimes m}$ is globally generated for $m \geq 2$ and very ample for $m \geq 3$ by a theorem of Lefschetz. Similar statements include a result of Pareschi and Popa \cite{PP1}*{Theorem 5.1} to the effect  that if $(X,\cL)$ is a polarized smooth irregular variety whose Albanese morphism is finite onto its image, then $(\omega_X\otimes\cL)^{\otimes 2}$ is globally generated.  We mention the recent work \cite{Riess} on the base locus of $\cL^{\otimes 2}$ when $X$ is an irreducible holomorphic symplectic manifold.

Following this train of thought, we define for $0 \leq k \leq {\rm dim}(X)$ the $k-$th Fujita number $f_{\cE,k}$  of a coherent sheaf $\cE$ on a smooth projective variety as 
\[
f_{\cE,k} \deq \min \{m\geq 1\,|\, \omega_X\otimes \Sym^{m'}\cE \text{ is  globally generated in codimension $k$ for all $m'\geq m$} \}\ .
\]
Along the same lines we define the Fujita number  $f_{X,k}$ of a given smooth projective variety $X$  as 
\[
f_{X,k} \deq \max\{ f_{\cA,k}\,|\, \text{$\cA$ is an ample line bundle on $X$}\}\ ,
\]
while for a smooth fibration $\pi\colon X\to Y$ we set $f_{\pi,k} \deq \max \st{f_{X_y,k}\colon y\in Y}$.  Note that the sequences $\{f_{\cE,k}\}_{k}, \{f_{X,k}\}_{k}, \{f_{\pi,k}\}_{k}$ are all nondecreasing.  Going forward we write $f_{\cE,\dim{X}}$ as $f_{\cE}$ (similarly for $f_{X,\dim{X}}$ and $f_{\pi,\dim{X}}$).

Fujita's prediction for global generation can be phrased as $f_{X}\leq \dim(X)+1$.  The examples above suggest that $f_{X}$ may be rather strongly influenced by the Albanese dimension. Since there exist non-minimal $X$ with maximal Albanese dimension and $f_{X} \geq \dim(X) - 1$ (see Example \ref{ex:ab-var-blowup}), it is natural to restrict our attention to minimal varieties. 

The main goal  of our paper is an effective global generation result for minimal varieties of Kodaira dimension zero.  Recall that these are precisely the varieties whose canonical bundle is numerically trivial, and that their Albanese maps are \'{e}tale-trivial fibrations (Section 8, \cite{Ka}).

\begin{thma}[Theorem~\ref{thm:codim of base loci}]
	Let $X$ be a smooth projective variety with $K_{X} =_{\rm num} 0,$ and let $\cL$ be an ample line bundle on $X$.  Then for $1 \leq k \leq \dim X$ we have that
	\begin{enumerate}
		\item $\cL^{\otimes rs}$ is globally generated in codimension $k$ for 
		\[
		r \geq f_{{\rm alb}_{X},k}\ \ \text{ and }\ \ s \geq f_{{\rm alb}_{X\ast}(\cL^{\otimes f_{{\rm alb}_{X},k}}),k}\
		\]
		\item If, in addition, $\Nef(\Alb(X))$ is rational polyhedral then  $\cL^{\otimes rs}$ is globally generated in codimension $k$ for 
		\[
		r \dgeq f_{{\rm alb}_{X},k}\ \ \text{ and }\ \ s \dgeq 2\cdot \chi(X_{\alb_X},\cL_{\alb_X}^{\otimes f_{{\rm alb}_{X},k}})\ .
		\]
		where $X_{\alb_X}$ denotes the general fibre of the Albanese morphism of $X$ and $\cL_{\alb_{X}}$ is the restriction of $\cL$ to $X_{\alb{X}}$. 
	\end{enumerate}	
\end{thma}

Recall that the general polarized abelian variety has Picard rank 1; as such, (2) of Theorem A covers the case where ${\rm Alb}(X)$ is general in moduli.

The closed fibers of ${\rm alb}_{X}$ have numerically trivial canonical bundles themselves (cf.~ Proposition \ref{prop:fib-alb}), and the Fujita numbers of such varieties can be as low as 1 or at least as high as their dimension (see Examples \ref{ex:ab-var-f1}, \ref{ex:c-y} and \ref{ex:sm-hyp}).  Fibrations over abelian varieties have recently been studied by Cao--P\u{a}un \cite{CP}, who proved subadditivity of Kodaira dimension in this case;  their proof was recently simplified by Hacon--Popa--Schnell \cite{HPS}.    

The usefulness of Theorem~A in concrete situations depends largely on our ability to control the Fujita numbers of the bundles ${\rm alb}_{X\ast}\cL$. Below we will exhibit a couple of moduli spaces where this works out quite well. 

Combining our work with the aforementioned results of \cite{Riess} we obtain an effective global generation result for moduli spaces of stable sheaves on abelian surfaces.  We refer to Section \ref{sec:stable-sheaves} for the associated terminology and notation. 

\begin{thmb}[Theorem~\ref{thm:mod-sp}]
	Let $S$ be an abelian surface, let $v$ be a Mukai vector satisfying ${\langle}v^{2}{\rangle} \geq 6,$ and let $H$ be a very general ample divisor on $S$.  Then for $0 \leq k \leq {\rm dim}~{\cM_{H}(v)}$ we have 
	\begin{equation*}
	f_{\cM_{H}(v),k} \leq \max\biggl\{f_{K_{H}(v),k},\frac{{\langle}v^{2}{\rangle}}{2}+1\biggr\}.
	\end{equation*}
\end{thmb}

\noindent

For any ample line bundle $H$ on $S$ we have that $\cM_{H}((1,0,-n),\cO_{S})$ is isomorphic to the Hilbert scheme $S^{[n]}$ parametrizing length-$n$ subschemes of $S$, so Theorem B immediately implies

\begin{corc}[Corollaries~\ref{cor:yoshioka-mov}, \ref{cor:ab-surf-hilb}]
	Let $S$ be an abelian surface, let $n \geq 3$ be an integer, and let $S^{[n]}$ be the Hilbert scheme of length-$n$ subschemes of $S.$  
	\begin{itemize}
		\item[(1)]{$f_{S^{[n]},1} \leq n+1.$}
		
		\item[(2)]{If $S$ is an abelian surface and $\cL$ is an ample line bundle on $S^{[2]},$ then $\cL^{\otimes k}$ is globally generated for all $k \geq 3,$ i.e.~ $f_{S^{[2]}} \leq 3.$}
	\end{itemize}
\end{corc}

Note that (2) improves upon the bound $f_{S^{[2]}} \leq 5$ implied by \cite{Kaw}.

Returning to general considerations of Fujita numbers, in rough analogy with  the subadditivity of Kodaira dimension, several classes of examples (e.g.~ surfaces, products of varieties with no non-trivial correspondences) together with Theorem A point towards the following

\begin{conj*}
	Let $X,Y$ be  smooth projective varieties and   $\pi\colon X\to Y$ a smooth fibration. Then 
	\[
	f_X \dleq f_\pi\cdot f_Y\ 
	\] 
\end{conj*}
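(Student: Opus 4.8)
The plan is to fix an ample $\cA$ on $X$, set $N\deq f_\pi$ and $K\deq f_Y$, and prove that $\cF\deq\omega_X\otimes\cA^{\otimes NK}$ is globally generated. Since $\pi$ is smooth we may write $\cF=\pi^*\omega_Y\otimes\omega_{X/Y}\otimes\cA^{\otimes NK}$, and I would reduce everything to the elementary principle that a coherent sheaf which is globally generated relative to $\pi$ and whose pushforward is globally generated on $Y$ is itself globally generated (this uses $H^0(X,\cF)=H^0(Y,\pi_*\cF)$). Thus it suffices to prove (a) surjectivity of $\pi^*\pi_*\cF\to\cF$, and (b) global generation of $\pi_*\cF=\omega_Y\otimes\pi_*(\omega_{X/Y}\otimes\cA^{\otimes NK})$ on $Y$, the last equality being the projection formula.

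For (a) I would argue fiberwise. Restricting to a closed fiber gives $\cF|_{X_y}=\omega_{X_y}\otimes(\cA|_{X_y})^{\otimes NK}$, which is globally generated because $NK\dgeq N\dgeq f_{X_y}$ by the definition of $f_\pi$. Provided the formation of $\pi_*\cF$ commutes with base change, fiberwise global generation upgrades to the desired relative global generation. Securing base change is the first technical hurdle: one wants constancy of $h^0$ along $\pi$ (via Grauert, after arranging that the obstructing higher direct images vanish), which one may try to force by first passing to a large power of $\cA$ and descending, or by exploiting smoothness of $\pi$. This step I expect to be routine but not entirely free.

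The heart of the matter is (b). Write $\cE\deq\pi_*(\omega_{X/Y}\otimes\cA^{\otimes NK})$; we must show $\omega_Y\otimes\cE$ is globally generated. By Berndtsson's theorem, $\cE$ is Nakano semipositive since $\cA^{\otimes NK}$ carries positive curvature, and the Green--Laplace operator estimates of To--Weng should make this positivity effective, measuring it against $\cA$. The aim is to convert the decomposition $NK=N\cdot K$ into a factorization in which $N=f_\pi$ worth of twists of $\cA$ secures fiberwise (hence relative) global generation, while the surviving positivity on $Y$ amounts to $K$ copies of an ample class $\cA_Y$, so that $\omega_Y\otimes\cE$ is dominated by $\omega_Y\otimes\cA_Y^{\otimes K}\otimes(\text{globally generated})$ and $f_Y=K$ applies. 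When $Y$ is abelian and $\cE$ is semihomogeneous, the Kobayashi--Hitchin correspondence together with the Lefschetz-type Proposition~\ref{prop:tensor-square} closes this gap precisely as in Theorem~A.

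I expect the passage just described in (b) to be the main obstacle. The Fujita number $f_Y$ only governs line bundles $\omega_Y\otimes(\text{ample})^{\otimes K}$, whereas $\cE$ has higher rank, and extracting a single ample class on $Y$ raised to the $K$-th power out of $\cA^{\otimes N(K-1)}$---a bundle that is \emph{not} pulled back from $Y$---is not formal; nefness of $\cE$ alone is visibly too weak to yield global generation. Circumventing this seems to demand either a genuine vector-bundle analogue of the Fujita number that is submultiplicative under $\pi$, or a positivity statement for $\cE$ strong enough to produce sections directly. The very shape $f_X\dleq f_\pi\cdot f_Y$ encodes this bookkeeping: each of the $K$ twists required by the base must be paid for by $N=f_\pi$ twists of $\cA$ on $X$ in order to preserve fiberwise global generation along the way.
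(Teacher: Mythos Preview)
The statement you are attempting to prove is not a theorem in the paper---it is stated as a \emph{conjecture} (Conjecture~\ref{conj:Fujita in fibrations}), and the paper offers no proof. In fact the authors explicitly remark that even the special case $K_X=_{\rm num}0$, $\pi={\rm alb}_X$, $f_{{\rm Alb}(X)}=1$ ``seems unlikely to be addressed with the methods of this paper alone.'' So there is no paper proof to compare against.

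Your outline is a reasonable sketch of how one would \emph{want} the argument to run, and you are candid that step~(b) is where it breaks down. That diagnosis is correct and matches the paper's own assessment. The core issue is exactly the one you name: the Fujita number $f_Y$ controls only adjoint \emph{line} bundles $\omega_Y\otimes\cB^{\otimes K}$ for ample $\cB$, whereas $\cE=\pi_*(\omega_{X/Y}\otimes\cA^{\otimes NK})$ is a higher-rank bundle whose positivity (Nakano semipositivity from Berndtsson, or even Nakano positivity) does not by itself yield global generation of $\omega_Y\otimes\cE$. The decomposition $NK=N\cdot K$ is purely numerological on the total space and does not produce an ample line bundle on $Y$ raised to the $K$-th power; $\cA$ is not pulled back from $Y$, and there is no mechanism to ``push down'' $N(K-1)$ of its twists into a line bundle on the base. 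The special case handled in Theorem~A works precisely because $Y$ is abelian with $f_Y=2$, the pushforward is \emph{semihomogeneous}, and Proposition~\ref{prop:tensor-square} substitutes for the missing vector-bundle Fujita statement. None of these ingredients is available in general.

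In short: your proposal is not a proof, and you already know why. The paper does not claim one either.
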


The organization of the paper can be summarized as follows:  Section \ref{sec:min-var} deals with the results we need about varieties with numerically trivial canonical bundle.  Section \ref{sec:sh} is devoted to the study of semihomogeneous bundles   Sections \ref{sec:thm-a} and \ref{sec:stable-sheaves} contain the proofs of Theorems A and B, respectively, while Section \ref{sec:ex-comp} contains various examples and some discussion around Fujita numbers. 

\subsection*{Acknowledgements} 
We would like to thank Thomas Bauer, Daniel Greb, Lawrence Ein, Stefan Kebekus, Sarah Kitchen, Vladimir Lazi\'c, Thomas Mettler, John C. Ottem, Mihai P\v{a}un, Mihnea Popa, Ulrike Riess, and Julius Ross for useful discussions related to this work.   The second author was supported by the Max-Planck-Institut f\"{u}r Mathematik while a substantial part of this work was done; he would like to thank them for their hospitality and excellent working conditions. 

\section{Minimal Varieties of Kodaira Dimension Zero}  
	\label{sec:min-var}

In this section we lay out the structure of our argument up to the point where we can use semihomogeneous vector bundles on complex tori.  We will use the following fundamental decomposition result for compact K\"ahler manifolds with numerically trivial canonical class; we refer to \cite{Bea} for the  proof.  Recall that a compact K\"{a}hler manifold $Y$ is \textit{hyperk\"{a}hler} if it is simply connected and $H^{0}(\Omega^{2}_{Y})$ is spanned by a symplectic 2-form (the term \textit{irreducible symplectic} is sometimes used) and \textit{Calabi-Yau} if $\dim(Y) \geq 3,$ the canonical bundle of $Y$ is trivial, and $H^{0}(\Omega^{p}_{Y})=0$ for $0 < p < \dim(Y).$  

\begin{theorem}[Beauville--Bogomolov decomposition]
	\label{thm:bb-dec}
	Let $X$ be a compact K\"ahler manifold with $c_1(X)=0$. Then there exists a finite \'etale cover ${\nu}' \colon \Pi_{i=1}^{d} X_i \to X$ such that every factor $X_i$ is a compact complex torus, a compact hyperk\"{a}hler manifold, or a Calabi--Yau manifold. 
\end{theorem}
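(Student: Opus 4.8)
The plan is to run the classical differential-geometric argument whose analytic heart is Yau's solution of the Calabi conjecture. Since $c_1(X) = 0$, Yau's theorem produces, in each K\"ahler class, a Ricci-flat K\"ahler metric $g$ on $X$; this is the one genuinely deep analytic input, and the remainder is structural, extracting the decomposition from the mere existence of such a metric.

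With $g$ fixed, I would pass to the universal cover $(\widetilde{X},\widetilde{g})$, a complete simply connected Ricci-flat K\"ahler manifold, and combine the Cheeger--Gromoll splitting theorem with the de Rham decomposition. Because $X$ is compact with vanishing Ricci curvature, these yield an isometric and holomorphic product splitting $\widetilde{X} \cong \mathbb{C}^{a} \times \prod_{i} V_{i}$, where $\mathbb{C}^{a}$ is the flat de Rham factor and each $V_{i}$ is a compact, simply connected manifold whose restricted holonomy acts irreducibly on the tangent space. Here Berger's classification of Riemannian holonomy enters: the K\"ahler condition confines the holonomy of a factor of complex dimension $m$ to $U(m)$, Ricci-flatness pushes it into $SU(m)$, and Berger's list then leaves only the possibilities $SU(m)$ and, when $m=2k$, the symplectic group $Sp(k)$.

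Next I would convert the holonomy data into the algebro-geometric conditions named in the statement through the dictionary between reduced holonomy and parallel holomorphic forms. A factor with holonomy $SU(m)$ carries a parallel --- hence holomorphic and nowhere-vanishing --- volume form and admits no parallel holomorphic $p$-form for $0 < p < m$, which is exactly the Calabi--Yau condition; a factor with holonomy $Sp(k)$ carries a parallel holomorphic symplectic $2$-form spanning $H^{0}(\Omega^{2})$, which is the hyperk\"ahler condition. The flat factor $\mathbb{C}^{a}$ is to become the complex torus.

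The step I expect to be the real obstacle is descending this splitting of the universal cover to a \emph{finite \'etale product} cover of the compact manifold $X$ itself. The deck group $\pi_{1}(X)$ acts on $\widetilde{X}$ by holomorphic isometries, and such isometries respect the de Rham decomposition, so the action permutes the factors $V_{i}$ among themselves and acts on $\mathbb{C}^{a}$ through its group of affine isometries. Since each $V_{i}$ is compact with irreducible holonomy $SU$ or $Sp$, it admits no nonzero parallel vector field, so its isometry group is finite; replacing $\pi_{1}(X)$ by a finite-index subgroup $\Gamma$ one may therefore arrange that $\Gamma$ acts trivially on $\prod_{i} V_{i}$ and acts on $\mathbb{C}^{a}$ by a cocompact lattice $\Lambda$ of translations, Bieberbach's theorem governing the flat/torus part. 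The quotient by $\Gamma$ is then the desired cover $\nu' \colon (\mathbb{C}^{a}/\Lambda) \times \prod_{i} V_{i} \to X$, exhibiting a product of a complex torus with compact Calabi--Yau and hyperk\"ahler factors. Keeping track of the index, verifying that each factor remains of the prescribed type after descent, and controlling the lattice $\Lambda$ is the bookkeeping carried out in \cite{Bea}.
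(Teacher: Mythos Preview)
The paper does not give its own proof of this statement; it simply refers the reader to \cite{Bea}. Your sketch is exactly the argument carried out there---Yau's Ricci-flat metric, the Cheeger--Gromoll/de Rham splitting of the universal cover, Berger's holonomy classification of the irreducible factors, and the finite-index descent via Bieberbach for the flat factor---so your proposal is correct and matches the referenced proof.
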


\begin{prop}
	\label{prop:fib-alb}
	Let $X$ be a smooth projective variety satisfying $K_{X} =_{\rm num} 0$.  Then the Albanese fibration ${\rm alb}_{X} : X \to {\rm Alb}(X)$ fits into a commutative diagram
	\begin{equation}
	\label{eq:bb-diag}
	\xymatrix{
		{Y \times A' \times A''} \ar[rr]^-{\nu} \ar[d]^-{{\rm pr}_{A' \times A''}}  	& 					&X \ar[d]_-{{\rm alb}_{X}}  \\ 
		{A' \times A''} \ar[r]^-{{\rm pr}_{A'}}            						&A' \ar[r]^-{\rho}		&{\rm Alb}(X) 
	}
	\end{equation}
	where $Y$ is a smooth projective variety satisfying $K_{X} =_{\rm num} 0$ and $q(Y)=0$, $A'$ and $A''$ are abelian varieties, $\nu : Y \times A' \times A'' \to X$ is a surjective \'{e}tale map, and $\rho : A' \to {\rm Alb}(X)$ is an isogeny.  
\end{prop}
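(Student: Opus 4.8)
The plan is to read the diagram off the Beauville--Bogomolov decomposition (Theorem~\ref{thm:bb-dec}). First I would note that $K_X =_{\rm num} 0$ forces $c_1(X) = 0$ in $H^2(X,\R)$: the class $c_1(K_X)$ lies in the Néron--Severi group, numerical triviality means it is torsion there, and hence it vanishes in $H^2(X,\R)$. Since $X$ is projective, hence K\"ahler, Theorem~\ref{thm:bb-dec} applies and produces a finite \'etale cover $\nu'\colon \prod_{i=1}^{d} X_i \to X$ whose factors are complex tori, hyperk\"ahler, or Calabi--Yau manifolds. The hyperk\"ahler factors are simply connected and the Calabi--Yau factors satisfy $H^0(\Omega^1)=0$, so each has vanishing irregularity; I would therefore let $Y$ be the product of all hyperk\"ahler and Calabi--Yau factors and $A$ the product of all torus factors, so that $\nu'\colon Y \times A \to X$ is finite \'etale and surjective with $q(Y)=0$ and $K_Y =_{\rm num} 0$ (in fact $K_Y \cong \O_Y$).

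Next I would promote $A$ to an abelian variety and compute the Albanese. A slice $\{y_0\}\times A$ is a closed subvariety of the projective variety $Y\times A$, so $A$ is a projective complex torus, i.e.\ an abelian variety (and likewise $Y$ is projective). Because $q(Y)=0$ we get $\Alb(Y\times A) = \Alb(A) = A$, with $\text{alb}_{Y\times A}$ identified with $\text{pr}_A$. Functoriality of the Albanese then yields a homomorphism (up to translation) $\phi\colon A \to \Alb(X)$ satisfying $\text{alb}_X \circ \nu' = \phi \circ \text{pr}_A$.

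I would then analyze $\phi$. Its image $\phi(A) = \text{alb}_X(\nu'(Y\times A)) = \text{alb}_X(X)$ is a closed abelian subvariety of $\Alb(X)$; since the image of the Albanese map generates $\Alb(X)$ as a group, this subvariety is all of $\Alb(X)$, so $\phi$ is surjective. Setting $A'' = (\ker\phi)^0$ and invoking Poincar\'e's complete reducibility theorem, I obtain a complementary abelian subvariety $A'$ for which the addition map $A'\times A'' \to A$ is an isogeny; restricting $\phi$ gives a surjection $\rho = \phi|_{A'}\colon A' \to \Alb(X)$ with finite kernel, hence an isogeny (and $\dim A' = q(X)$). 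Precomposing $\nu'$ with $\id_Y \times (A'\times A'' \to A)$, which remains finite \'etale in characteristic zero, produces $\nu\colon Y\times A'\times A'' \to X$; since $A''$ lies in $\ker\phi$, the identity $\text{alb}_X\circ\nu = \rho\circ\text{pr}_{A'}\circ\text{pr}_{A'\times A''}$ holds, which is exactly the commutativity of~\eqref{eq:bb-diag}.

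The main obstacle is the middle step: $\phi$ need not be an isomorphism, because the torus factors assemble into an abelian variety $A$ whose dimension may exceed $q(X)=\dim\Alb(X)$, the excess being absorbed into the fibre direction $A''$. Pinning this down requires the complete reducibility of abelian varieties to split $A$ up to isogeny as $A'\times A''$, together with some bookkeeping to keep the modified cover \'etale and to track base points so that the relevant Albanese maps are genuine homomorphisms rather than merely morphisms.
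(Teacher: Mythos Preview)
Your proof is correct and follows essentially the same strategy as the paper's: apply Beauville--Bogomolov, identify the Albanese of the cover with the torus factor, and split that factor up to isogeny along the induced map to $\Alb(X)$. The only cosmetic difference is that the paper realizes $A'$ as a \emph{quotient} via the Stein factorization of $\phi$ (and then invokes reducibility to produce the isogeny $A'\times A''\to A$), whereas you realize $A'$ directly as a complementary \emph{subvariety} via Poincar\'e reducibility; the two constructions are interchangeable.
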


\begin{proof}
	By Theorem \ref{thm:bb-dec} there exists an \'{e}tale covering ${\nu}' : Y \times B \to X$ where $Y$ satisfies the claimed properties and $B$ is an abelian variety.  Since $q(Y)=0,$ we have that ${\rm Alb}(Y \times B) = B,$ and the Albanese map of $Y \times B$ is the projection ${\rm pr}_{B} : Y \times B \to B.$ 
	
	The Stein factorization of ${\rm alb}({\nu}') : B \to {\rm Alb}(X)$ has the form ${\rho}' \circ {\gamma},$ where ${\gamma} : B \to A'$ is an epimorphism of abelian varieties with connected kernel and ${\rho}' : A' \to {\rm Alb}(X)$ is an isogeny.  If $A'' := {\rm ker}(\gamma)$ then we have a diagram
	\begin{equation*}
	\xymatrix{
		{A' \times A''} \ar[r]^-{\theta} \ar[d]_-{{\rm pr}_{A'}}			&B \ar[d]_-{\gamma} 	\\
		{A'} \ar[r]^-{{\rho}''}    							&{A'}		
	}
	\end{equation*}
	where $\theta$ and ${\rho}''$ are isogenies.  Letting $\rho = {\rho}' \circ {\rho}''$ and $\nu = {\nu}' \circ (1_{Y} \times \theta),$ we see that the desired diagram (\ref{eq:bb-diag}) exists.
\end{proof}
 
\begin{rem}
	\label{rem:hyp}
	If $Y$ is a single point, then $X$ is a hyperelliptic variety in the sense of \cite{Lan}, and Theorem 1.2 of \cite{CI} implies that $\cL^{\otimes 2}$ is globally generated for any ample $\cL$ on $X,$ i.e.~ that $f_{X} \leq 2.$
\end{rem}

\section{Semihomogeneous Bundles}\label{sec:sh}

\subsection{Algebraic Preliminaries} 
We summarize the necessary algebraic properties of semihomogeneous bundles on abelian varieties. The Chern class computation is very likely to be known as well, but we did not find a concrete reference for it. Let $A$ be a complex abelian variety.

\begin{defn}
	A vector bundle $\cE$ on $A$ is \emph{semihomogeneous}, if for every $a\in A$ there exists a line bundle $\cL$ such that 
\begin{equation}
	\label{eq:sh-def}
	t_a^*\cE \simeq \cE\otimes \cL
\end{equation}

\noindent
where $t_a \colon A\to A$ stands for translation by $a$.  If $\cL \cong \cO_{A}$ in (\ref{eq:sh-def}) we say that $\cE$ is \textit{homogeneous.}
\end{defn}

Note that in this definition we must necessarily have $\cL\in \Pic^\circ(A)$.  We now collect some fundamental facts about semihomogeneous bundles from \cite{Muk}

\begin{prop}[Propositions 6.17 and 6.18, \cite{Muk}]\label{prop:semi-decomp} Let $\cE$ be a semihomogeneous bundle on $A.$  Then 
	\begin{enumerate}
		\item		for any semihomogeneous bundle $\cF$ on $A$, we have that $\cE \otimes \cF$ is semihomogeneous.
		\item 	there exist simple semihomogeneous bundles $\cE_{1}, \cdots \cE_{s}$ and indecomposable unipotent bundles $\cU_{1}, \cdots ,\cU_{s}$ on $A$ such that
		\begin{equation*}
		\label{eq:sh-decomp}
		\cE \cong \bigoplus_{j=1}^{s}\cE_{j} \otimes \cU_{j}\ .
		\end{equation*}
				Moreover, for each $j \in \{1, \cdots ,s\}$ there exists $\alpha_j \in \widehat{A}$ such that $\cE_{j} \cong \cE_{1} \otimes \alpha_{j}.$
	\end{enumerate}	
\end{prop}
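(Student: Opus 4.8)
The first assertion I would dispatch directly from the definition. Given $a \in A$, semihomogeneity of $\cE$ and $\cF$ provides line bundles $\cL, \cM \in \Pic^\circ(A)$ with $t_a^* \cE \cong \cE \otimes \cL$ and $t_a^* \cF \cong \cF \otimes \cM$. Since pullback commutes with tensor product,
\[
t_a^*(\cE \otimes \cF) \cong t_a^*\cE \otimes t_a^* \cF \cong (\cE \otimes \cF) \otimes (\cL \otimes \cM),
\]
and $\cL \otimes \cM \in \Pic^\circ(A)$, so $\cE \otimes \cF$ is semihomogeneous. That settles $(1)$.

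For $(2)$ the plan is to isolate a single simple semihomogeneous bundle governing the ``slope'' of $\cE$ and then reduce the remaining structure to the homogeneous case. To a semihomogeneous bundle one attaches its slope $\delta(\cE) = c_1(\cE)/\rank(\cE) \in \NS(A)\otimes \Q$. I would first record two facts from Mukai's theory \cite{Muk}: semihomogeneous bundles are semistable with respect to any polarization, and every Jordan--H\"older factor of $\cE$ is a \emph{simple} semihomogeneous bundle of the same slope $\delta(\cE)$; moreover any two simple semihomogeneous bundles of a given slope differ by tensoring with an element of $\widehat{A}$. Fixing one such simple bundle $\cE_1$ of slope $\delta(\cE)$, this last statement is precisely what produces the elements $\alpha_j \in \widehat{A}$ with $\cE_j \cong \cE_1 \otimes \alpha_j$ once the direct-sum decomposition is in hand.

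The heart of the argument is the reduction $\cE \cong \cE_1 \otimes \cV$ with $\cV$ homogeneous. Here I would use Mukai's isogeny realization of a simple semihomogeneous bundle: there is an isogeny $\pi \colon B \to A$ and a line bundle $P$ on $B$ with $\pi^* \cE_1 \cong P^{\oplus r}$, where $r = \rank(\cE_1)$. Pulling $\cE$ back along $\pi$ and exploiting semihomogeneity, one checks that $\pi^*\cE \cong P \otimes \cW$ for a homogeneous bundle $\cW$ on $B$; descending along $\pi$ and using that $\cE_1$ ``divides'' $\cE$ in the tensor category of semihomogeneous bundles of slope $\delta(\cE)$ yields $\cE \cong \cE_1 \otimes \cV$ with $\cV$ homogeneous on $A$. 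I would then invoke the classification of homogeneous bundles (Matsushima--Morimoto; see also \cite{Muk}): $\cV \cong \bigoplus_j P_{\alpha_j} \otimes \cU_j$ with $P_{\alpha_j} \in \widehat{A}$ and $\cU_j$ indecomposable unipotent. Tensoring through gives
\[
\cE \cong \cE_1 \otimes \cV \cong \bigoplus_{j} (\cE_1 \otimes \alpha_j) \otimes \cU_j = \bigoplus_j \cE_j \otimes \cU_j,
\]
as desired.

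The step I expect to be the main obstacle is the reduction $\cE \cong \cE_1 \otimes \cV$: making precise the ``division by $\cE_1$'' requires controlling descent along the isogeny $\pi$ and verifying that the quotient object is genuinely homogeneous rather than merely semihomogeneous of slope zero. Equivalently, one must establish that $-\otimes\cE_1$ is an equivalence between homogeneous bundles and semihomogeneous bundles of slope $\delta(\cE)$, which is exactly the content packaged in Mukai's Propositions 6.17--6.18.
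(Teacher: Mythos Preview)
The paper does not supply a proof of this proposition at all; it simply records the statement with a citation to Mukai \cite{Muk}*{Propositions 6.17 and 6.18}. So there is nothing in the paper to compare your argument against beyond the reference itself.

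Your treatment of (1) is complete and correct: it is the obvious one-line computation from the definition.

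Your outline of (2) is a fair summary of the architecture of Mukai's argument---reduce to a single slope, trivialize after an isogeny, and then invoke the Matsushima--Morimoto classification of homogeneous bundles. However, as you yourself flag, the load-bearing step is the equivalence (via $-\otimes \cE_1$) between homogeneous bundles and semihomogeneous bundles of slope $\delta(\cE)$, and you do not actually carry this out; you end by saying it ``is exactly the content packaged in Mukai's Propositions 6.17--6.18,'' which is the statement under discussion. So as written the argument for (2) is an informed sketch with an explicitly acknowledged circularity at its core, not a self-contained proof. Since the paper itself defers entirely to \cite{Muk} here, your sketch already goes further than the paper does, but if you intend it to stand on its own you would need to fill in the descent/division step (e.g.\ via Mukai's Theorem 5.8 and the structure of $\Phi(E)$ in \cite{Muk}) rather than pointing back to the target propositions.
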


\begin{prop}[Propositions 6.13 and 6.16, \cite{Muk}] 
	\label{prop:sh-stab}
	If $\cE$ is a semihomogeneous bundle on $A,$ and $H$ is an ample divisor on $A,$ then $\cE$ is $\mu_{H}$-semistable, and is $\mu_{H}$-stable if and only if $\cE$ is simple.
\end{prop}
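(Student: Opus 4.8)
The plan is to reduce the entire statement to the single fact that a \emph{simple} semihomogeneous bundle is $\mu_H$-stable, and then to obtain everything else from the decomposition in Proposition \ref{prop:semi-decomp}(2) by purely formal slope bookkeeping. Throughout write $n = \dim A$ and $\mu_H(\cF) = (c_1(\cF)\cdot H^{n-1})/\rk(\cF)$. The core assertion --- call it $(\ast)$ --- is that every simple semihomogeneous $\cE$ is $\mu_H$-stable; I expect this to be the only genuine obstacle, and I return to it at the end.

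First I would record that all the building blocks in the decomposition $\cE \cong \bigoplus_{j=1}^{s} \cE_j \otimes \cU_j$ share one common slope. Indeed $\cE_j = \cE_1 \otimes \alpha_j$ with $\alpha_j \in \widehat{A} = \Pic^\circ(A)$, so $c_1(\cE_j)\cdot H^{n-1} = c_1(\cE_1)\cdot H^{n-1}$ and $\mu_H(\cE_j) = \mu_H(\cE_1)$; and each $\cU_j$, being a successive extension of copies of $\cO_A$, has $c_1(\cU_j)=0$. A one-line Chern-class computation then gives $\mu_H(\cE_j \otimes \cU_j) = \mu_H(\cE_1) =: \mu$ for every $j$, whence $\mu_H(\cE) = \mu$ as well.

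Granting $(\ast)$, semistability is then formal. Each $\cE_j = \cE_1 \otimes \alpha_j$ is $\mu_H$-stable, since $(\ast)$ applies to $\cE_1$ and twisting by a line bundle preserves $\mu_H$-stability. Tensoring the filtration of $\cU_j$ with successive quotients $\cO_A$ by the bundle $\cE_j$ exhibits $\cE_j \otimes \cU_j$ as an iterated self-extension of $\cE_j$; an iterated extension of $\mu_H$-semistable sheaves of one fixed slope is again $\mu_H$-semistable of that slope, so each $\cE_j \otimes \cU_j$ is semistable of slope $\mu$, and a direct sum of equal-slope semistable sheaves is semistable. This gives $\mu_H$-semistability of $\cE$. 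For the equivalence of stability with simplicity, the implication stable $\Rightarrow$ simple is the classical fact that a nonzero endomorphism of a stable sheaf, whose image is simultaneously a sub- and a quotient sheaf, must be an isomorphism, so $\End(\cE)$ is a finite-dimensional division algebra over $\C$ and hence equals $\C$. Conversely, if $\cE$ is simple I would show the decomposition is forced to be trivial: a nontrivial unipotent factor $\cU_j$ of rank $\geq 2$ carries a nilpotent, hence non-scalar, endomorphism (compose the quotient $\cU_j \twoheadrightarrow \cO_A$ with the inclusion $\cO_A \hookrightarrow \cU_j$), producing a non-scalar endomorphism of $\cE$; and if $s \geq 2$ the diagonal scalars on the summands already give $\C^{s} \subset \End(\cE)$. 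Simplicity therefore forces $s = 1$ and $\cU_1 = \cO_A$, i.e.\ $\cE \cong \cE_1$ is simple semihomogeneous, so $(\ast)$ makes it stable.

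It remains to prove $(\ast)$, the heart of the matter. My approach would be to exploit the fact that a simple semihomogeneous bundle splits after an isogeny: there is an isogeny $\pi\colon B \to A$ (of degree $\rk(\cE)$) and a line bundle $L$ on $B$ with $\cE \cong \pi_{*}L$, and correspondingly $\pi^{*}\cE \cong \bigoplus_{x \in \ker(\pi)} t_x^{*}L$ is a direct sum of line bundles of one slope. Since $\pi$ is finite of degree $\rk(\cE)$, the projection formula shows slopes scale uniformly under $\pi^{*}$, so any $\mu_H$-destabilizing subsheaf of $\cE$ would pull back to a $\mu_{\pi^{*}H}$-destabilizing subsheaf of the polystable bundle $\pi^{*}\cE$, a contradiction; this re-proves $\mu_H$-semistability of $\cE$. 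Because $\pi$ is \'etale (we are over $\C$), polystability descends along $\pi$, so $\cE$ is in fact $\mu_H$-polystable, and being simple it must be a single stable summand, which is $(\ast)$. The real obstacle is thus establishing the isogeny-splitting input --- equivalently, that a simple semihomogeneous bundle saturates the Bogomolov inequality and is projectively flat. This is precisely where the structure theory of \cite{Muk} enters (or, in the spirit of the later sections, the Kobayashi--Hitchin correspondence identifying semihomogeneity with the existence of a projectively flat Hermitian--Einstein metric), and it is the one step I would not expect to be formal.
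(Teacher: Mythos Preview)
The paper gives no proof here; the proposition is imported from \cite{Muk} as a black box, on equal footing with Propositions~\ref{prop:semi-decomp} and \ref{prop:simple sh}. Your argument instead \emph{derives} the statement from those two companion results, and the deduction is correct: the slope computation for the summands $\cE_j\otimes\cU_j$, the passage from stability of each $\cE_j$ to semistability of the iterated self-extension and then of the direct sum, the forcing of $s=1$ and $\cU_1\cong\cO_A$ under simplicity, and the proof of $(\ast)$ via descent of polystability along the finite \'etale isogeny furnished by Proposition~\ref{prop:simple sh} all go through as written. The only caution is one of logical hygiene at the level of \cite{Muk} itself: Mukai's decomposition (his Propositions~6.17--6.18, quoted here as Proposition~\ref{prop:semi-decomp}) is established \emph{after} his semistability result (his~6.13) and presumably uses it, so invoking the decomposition to deduce semistability in the non-simple case is circular if one unwinds the references. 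Your proof of $(\ast)$ rests only on Mukai's Theorem~5.8 and is immune to this; it is just the extension to non-simple $\cE$ that would need an independent route to stand as a self-contained replacement for Mukai's argument. Within the framework of the present paper, where all of these facts are taken as given, there is nothing to object to.
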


\begin{prop}[Theorem 5.8 and Proposition 7.3, \cite{Muk}]\label{prop:simple sh}
	\label{prop:semihom-dir}
	Let $\cE$ be a simple vector bundle on $A.$  Then the following are equivalent. 
	\begin{enumerate}
		\item $\cE$ is semihomogeneous.
		\item There exists an abelian variety $A',$ an isogeny $p : A' \to A$ and a line bundle $\cL$ on $A'$ such that $\cE \cong p_{\ast}\cL.$
		\item There exists an abelian variety $B$ along with an isogeny $\sigma\colon B\to A$ and a line bundle $\cM$ on $B$ such that $\sigma^\ast \cE\cong \cM^{\oplus r}$, where $r=\rk \cE$. 
	\end{enumerate}
 \end{prop}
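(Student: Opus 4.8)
The plan is to establish the cycle $(2)\Rightarrow(1)$, $(1)\Rightarrow(3)$, $(3)\Rightarrow(2)$; the first implication (and the auxiliary $(2)\Rightarrow(3)$) are formal base-change computations, the analytic machinery of this section handles $(1)\Rightarrow(3)$, and the genuine content lives in $(3)\Rightarrow(2)$.

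For $(2)\Rightarrow(1)$, write $\cE\cong p_*\cL$ with $p\colon A'\to A$ an isogeny, which is \'etale in characteristic zero. Fix $a\in A$ and a lift $a'\in A'$; since $p\circ t_{a'}=t_a\circ p$ we obtain $t_a^*p_*\cL\cong p_*t_{a'}^*\cL$. Writing $t_{a'}^*\cL\cong\cL\otimes\beta$ with $\beta\in\Pic^0(A')$, and using that $p^*\colon\Pic^0(A)\to\Pic^0(A')$ is the surjective dual isogeny, we may take $\beta\cong p^*\gamma$; the projection formula then gives $t_a^*\cE\cong p_*(\cL\otimes p^*\gamma)\cong\cE\otimes\gamma$, so $\cE$ is semihomogeneous. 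The same input yields $(2)\Rightarrow(3)$: flat base change along $p$ itself gives $p^*p_*\cL\cong\bigoplus_{x\in\ker p}t_x^*\cL\cong\bigoplus_x\cL\otimes\beta_x$ with $\beta_x\in\Pic^0(A')$, and a further isogeny $\sigma'\colon B\to A'$ whose pullback kills the finite subgroup $\{\beta_x\}$ makes $\sigma=p\circ\sigma'$ satisfy $\sigma^*\cE\cong(\sigma'^*\cL)^{\oplus r}$, where $r=\deg p=\rk\cE$.

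For $(1)\Rightarrow(3)$ I would invoke the correspondence between semihomogeneity and projective flatness developed in this section. Semihomogeneity makes $\P(\cE)$ translation invariant, and the Kobayashi--Hitchin correspondence identifies it with a projectively flat bundle, hence with a projective-unitary representation $\bar\rho\colon\pi_1(A)\to\mathrm{PU}(r)$ of the abelian fundamental group; simplicity of $\cE$ forces $\bar\rho$ to be irreducible. Lifting generators to $\mathrm{U}(r)$, their commutators are central of determinant one, so the commutator pairing is valued in $r$-th roots of unity; its radical acts by scalars, hence trivially in $\mathrm{PU}(r)$, while nondegeneracy on the quotient together with irreducibility shows $\ker\bar\rho$ equals the radical and $\bar\rho(\pi_1(A))$ is finite of order $r^2$. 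The isogeny $\sigma\colon B\to A$ attached to $\ker\bar\rho$ trivializes $\P(\cE)$, and a trivial projective bundle is the projectivization of some $\cM^{\oplus r}$, so $\sigma^*\cE\cong\cM^{\oplus r}$.

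The heart of the argument is $(3)\Rightarrow(2)$, and I expect this to be the main obstacle. Write $K=\ker\sigma$, acting on $B$ by translation; the isomorphism $\sigma^*\cE\cong\cM^{\oplus r}$ forces $t_x^*\cM\cong\cM$ for all $x\in K$ (by Krull--Schmidt), and the canonical descent datum on $\sigma^*\cE$ becomes a projective representation $K\to\mathrm{PGL}_r$, that is, an honest representation of a finite Heisenberg group $\tilde K$ centrally extending $K$. Its alternating pairing is nondegenerate modulo the radical, and simplicity of $\cE$ (which identifies $\End_A(\cE)$ with the $K$-invariant endomorphisms of $\cM^{\oplus r}$) shows that $k^{\oplus r}$ is an \emph{irreducible} $\tilde K$-module; by Stone--von Neumann it is induced from a character of a maximal isotropic subgroup $K_0\subseteq K$ of index $r$. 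Descending $\cM$ along $B\to A':=B/K_0$ produces a line bundle $\cL$, and the intermediate isogeny $p\colon A'\to A$ has degree $r$. To identify $p_*\cL$ with $\cE$, note both are simple semihomogeneous, hence $\mu_H$-stable by Proposition \ref{prop:sh-stab}, of the same rank $r$ and the same slope, and adjunction furnishes a nonzero morphism between them, so a Schur-type argument for stable sheaves of equal slope forces an isomorphism. The delicate points I anticipate are verifying the nondegeneracy and index computation for the Heisenberg pairing, and confirming the slope equality required by the final stability step.
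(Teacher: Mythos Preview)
The paper does not prove this proposition at all: it is quoted verbatim from Mukai's 1978 paper (Theorem~5.8 and Proposition~7.3 there), with no argument supplied. So there is nothing to compare your proof \emph{to} in the present paper; the relevant question is whether your sketch stands on its own and is consistent with the logical order of this section.

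On that score there is a genuine circularity in your step $(1)\Rightarrow(3)$. You write that you ``invoke the correspondence between semihomogeneity and projective flatness developed in this section,'' i.e.\ Theorem~\ref{thm:KH for sh}. But the proof of Theorem~\ref{thm:KH for sh} uses Lemma~\ref{lem:Chern classes of sh} to obtain the Bogomolov-type equality $(r-1)c_1^2=2rc_2$, and the proof of Lemma~\ref{lem:Chern classes of sh} begins by applying the present Proposition (specifically the implication $(1)\Rightarrow(3)$) to split $\sigma^*\cE$ as $\cM^{\oplus r}$. So as written, your argument for $(1)\Rightarrow(3)$ assumes its own conclusion. If you want an analytic route here you must produce the projectively flat structure \emph{without} passing through the Chern-class identity---for instance by arguing directly that $\P(\cE)$, being a homogeneous $\P^{r-1}$-bundle over a torus, arises from a representation $\pi_1(A)\to\mathrm{PGL}_r$; this is closer to what Mukai actually does algebraically (via the group $\Phi(\cE)$) and does not require Kobayashi--Hitchin.

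Your treatments of $(2)\Rightarrow(1)$, $(2)\Rightarrow(3)$, and the Heisenberg/descent outline for $(3)\Rightarrow(2)$ are broadly in the spirit of Mukai's original arguments and look salvageable, though in $(3)\Rightarrow(2)$ you should be explicit about why the irreducible $\tilde K$-module has dimension exactly $r$ (equivalently, why the commutator pairing on $K$ is already nondegenerate rather than merely nondegenerate modulo a radical); without this the index-$r$ claim for $K_0$ is not justified.
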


\begin{prop}[Proposition 5.4, \cite{Muk}]
	\label{prop:semi-isog}
	Let $A'$ and $A''$ be abelian varieties, and let ${\pi}' : A' \to A$ and ${\pi}'' : A'' \to A'$ be isogenies.  If $\cE$ is a vector bundle on $A',$ then all three of the vector bundles $\cE, {\pi}'_{\ast}\cE, {\pi}''^{\ast}\cE$ are semihomogeneous if and only if one of them is semihomogeneous.
\end{prop}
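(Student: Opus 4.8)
The plan is to reduce the three-way equivalence to two statements about a single isogeny $\sigma\colon B\to C$ of abelian varieties and a vector bundle $\cF$:
(a) if $\cF$ lives on $C$, then $\cF$ is semihomogeneous if and only if $\sigma^{\ast}\cF$ is; and
(b) if $\cF$ lives on $B$, then $\cF$ is semihomogeneous if and only if $\sigma_{\ast}\cF$ is.
Applying (a) to $\sigma=\pi''$ gives the equivalence of semihomogeneity for $\cE$ and $\pi''^{\ast}\cE$, while applying (b) to $\sigma=\pi'$ does the same for $\cE$ and $\pi'_{\ast}\cE$; these two equivalences chain together to yield the statement. The forward implications are the easy half. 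For (a), given $b\in B$ I would set $a=\sigma(b)$ and use the identity $\sigma\circ t_b=t_a\circ\sigma$ to obtain $t_b^{\ast}\sigma^{\ast}\cF\cong\sigma^{\ast}t_a^{\ast}\cF\cong\sigma^{\ast}\cF\otimes\sigma^{\ast}\cL$, observing that $\sigma^{\ast}\cL\in\Pic^0(B)$ and that every $b$ arises this way since $\sigma$ is surjective. For the forward half of (b) I would use the same identity together with flat base change to write $t_a^{\ast}\sigma_{\ast}\cF\cong\sigma_{\ast}t_b^{\ast}\cF\cong\sigma_{\ast}(\cF\otimes\cL)$, and then invoke surjectivity of the dual isogeny $\sigma^{\ast}\colon\Pic^0(C)\to\Pic^0(B)$ to write $\cL=\sigma^{\ast}\cM$ and extract it by the projection formula, giving $t_a^{\ast}\sigma_{\ast}\cF\cong\sigma_{\ast}\cF\otimes\cM$ with $\cM\in\Pic^0(C)$.

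For the two reverse implications I would exploit the identities
\[
\sigma_{\ast}\sigma^{\ast}\cF\;\cong\;\cF\otimes\sigma_{\ast}\cO_B,\qquad
\sigma^{\ast}\sigma_{\ast}\cF\;\cong\;\bigoplus_{x\in\ker\sigma}t_x^{\ast}\cF,
\]
the first being the projection formula and the second flat base change along the Cartesian square expressing that $\sigma$ is a torsor under $\ker\sigma$. If $\sigma^{\ast}\cF$ is semihomogeneous, then by the forward half of (b) so is $\sigma_{\ast}\sigma^{\ast}\cF\cong\cF\otimes\sigma_{\ast}\cO_B$; writing $\sigma_{\ast}\cO_B\cong\bigoplus_\chi P_\chi$ with $P_\chi\in\Pic^0(C)$ and the trivial character contributing $\cO_C$, we see that $\cF$ is a direct summand of a semihomogeneous bundle. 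Dually, if $\sigma_{\ast}\cF$ is semihomogeneous, then by the forward half of (a) so is $\sigma^{\ast}\sigma_{\ast}\cF\cong\bigoplus_{x}t_x^{\ast}\cF$, in which $\cF=t_0^{\ast}\cF$ is a direct summand. Both reverse implications therefore follow once I establish that a direct summand of a semihomogeneous bundle is again semihomogeneous.

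This last assertion is the crux, and it is exactly the point at which the naive argument fails: pullback along an isogeny is not injective on isomorphism classes, so after descending one controls the indecomposable summands only up to twisting by characters of $\ker\sigma$, and a priori that twist could vary from summand to summand. I would resolve this using the structure theorem, Proposition \ref{prop:semi-decomp}. Writing a semihomogeneous $\cG$ as $\bigoplus_j\cE_j\otimes\cU_j$ with $\cE_j\cong\cE_1\otimes\alpha_j$ simple semihomogeneous ($\alpha_j\in\Pic^0$) and $\cU_j$ indecomposable unipotent, each $\cE_j\otimes\cU_j$ is indecomposable, so by Krull--Schmidt any direct summand $\cF$ of $\cG$ is a sub-sum $\bigoplus_{j\in S}\cE_j\otimes\cU_j$. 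Fixing for each $x$ an isomorphism $t_x^{\ast}\cE_1\cong\cE_1\otimes L_x$ with $L_x\in\Pic^0$, and using that unipotent bundles are homogeneous (so $t_x^{\ast}\cU_j\cong\cU_j$) together with $t_x^{\ast}\alpha_j\cong\alpha_j$, one computes $t_x^{\ast}\cF\cong\cF\otimes L_x$ with a single $L_x$ serving all summands at once. It is precisely this uniformity of $L_x$---forced by the fact that every simple constituent is a $\Pic^0$-twist of the one bundle $\cE_1$---that rescues the reverse implications, and I expect checking this summand lemma to be the main point of the argument.
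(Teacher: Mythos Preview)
The paper does not give its own proof of this proposition; it is simply quoted from \cite{Muk} (Proposition 5.4) without argument, so there is no in-paper proof to compare against.

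Your argument is correct. The reduction to the two one-isogeny statements (a) and (b), the forward implications via $\sigma\circ t_b=t_a\circ\sigma$ together with surjectivity of the dual isogeny $\widehat{\sigma}$, and the reverse implications via the identities $\sigma_{\ast}\sigma^{\ast}\cF\cong\cF\otimes\sigma_{\ast}\cO_B$ and $\sigma^{\ast}\sigma_{\ast}\cF\cong\bigoplus_{x\in\ker\sigma}t_x^{\ast}\cF$ all go through as you describe. The summand lemma you extract from Proposition~\ref{prop:semi-decomp} is valid: the uniformity of the twist $L_x$ across all simple constituents, forced by $\cE_j\cong\cE_1\otimes\alpha_j$, is exactly the point.

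One caution if your goal is to reconstruct Mukai's own development rather than merely to give a valid proof: you invoke Proposition~\ref{prop:semi-decomp} (Mukai's 6.17--6.18) to prove what is Mukai's 5.4, and in \cite{Muk} the order is reversed, with 5.4 feeding into the later structure theory. Within the present paper this is harmless, since both statements are imported as established results and no circularity is created. But Mukai's original proof of 5.4 avoids the structure theorem and instead works directly with the subgroup $\Phi(\cE)=\{(a,\alpha)\in A\times\widehat{A}:t_a^{\ast}\cE\cong\cE\otimes\alpha\}$, showing its dimension is preserved under $\pi'_{\ast}$ and $\pi''^{\ast}$; that route is more elementary and logically prior.
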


\begin{lem}[Chern classes of semihomogeneous bundles]\label{lem:Chern classes of sh}
Let $\cE$ be a semihomogeneous vector bundle on the abelian variety $A$. Then it has the total Chern class of a direct sum, that is,
\begin{equation*}
	\label{eq:sh-chern}
	c(\cE) \equ {\left( 1 + \frac{c_1(\cE)}{r}\right)}^r
\end{equation*}
with $r=\rk \cE$.
\end{lem}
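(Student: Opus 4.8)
The plan is to carry out the computation in the rational cohomology ring $H^{*}(A,\Q)$ (equivalently the rational Chow ring), where the division by $r$ makes sense, and to exploit that the desired identity can be checked after pulling back along an isogeny. The crucial formal tool is that an isogeny $\sigma\colon B\to A$ is a finite \'etale cover of some degree $d$, so the projection formula gives $\sigma_{*}\sigma^{*}=d\cdot\id$ on $H^{*}(A,\Q)$; hence $\sigma^{*}\colon H^{*}(A,\Q)\to H^{*}(B,\Q)$ is injective (in fact an isomorphism), and any identity among Chern classes on $A$ may be verified after applying $\sigma^{*}$.

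First I would dispose of the case where $\cE$ is \emph{simple}. By Proposition~\ref{prop:semihom-dir}(3) there is an isogeny $\sigma\colon B\to A$ and a line bundle $\cM$ on $B$ with $\sigma^{*}\cE\cong\cM^{\oplus r}$. Then $\sigma^{*}c(\cE)=c(\cM^{\oplus r})=(1+c_{1}(\cM))^{r}$, while $\sigma^{*}c_{1}(\cE)=c_{1}(\sigma^{*}\cE)=r\,c_{1}(\cM)$, so $c_{1}(\cM)=\sigma^{*}(c_{1}(\cE)/r)$. Substituting gives $\sigma^{*}c(\cE)=\sigma^{*}\big((1+c_{1}(\cE)/r)^{r}\big)$, and injectivity of $\sigma^{*}$ yields the claim for simple $\cE$.

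For the general case I would feed in the structural decomposition $\cE\cong\bigoplus_{j=1}^{s}\cE_{j}\otimes\cU_{j}$ of Proposition~\ref{prop:semi-decomp}(2), in which each $\cE_{j}$ is simple semihomogeneous, each $\cU_{j}$ is indecomposable unipotent, and $\cE_{j}\cong\cE_{1}\otimes\alpha_{j}$ with $\alpha_{j}\in\widehat A$. Since a unipotent bundle is an iterated extension of $\cO_{A}$, the Whitney formula forces $c(\cU_{j})=1$, so its Chern roots all vanish; and the simple case just proved shows that every Chern root of $\cE_{j}$ equals $x_{j}:=c_{1}(\cE_{j})/\rk\cE_{j}$. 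The decisive point is that $c_{1}(\alpha_{j})=0$ in $H^{2}(A,\Q)$ for $\alpha_{j}\in\Pic^{0}(A)$, which together with $\rk\cE_{j}=\rk\cE_{1}$ gives $c_{1}(\cE_{j})=c_{1}(\cE_{1})$ and hence $x_{j}=x:=c_{1}(\cE_{1})/\rk\cE_{1}$ for every $j$. By the splitting principle the Chern roots of $\cE_{j}\otimes\cU_{j}$ are then all equal to $x$, so $c(\cE_{j}\otimes\cU_{j})=(1+x)^{\rk(\cE_{j}\otimes\cU_{j})}$; multiplying over $j$ yields $c(\cE)=(1+x)^{r}$ with $r=\rk\cE=\sum_{j}\rk(\cE_{j}\otimes\cU_{j})$, and reading off the degree-two part identifies $x=c_{1}(\cE)/r$, which is exactly the asserted identity.

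The only genuinely delicate steps are the reduction to the simple case via an isogeny and the observation that all the simple constituents share a single normalized first Chern class $x$; once these are in hand the rest is a formal manipulation of Chern roots. I would emphasize that the argument must live in rational cohomology so that $c_{1}(\cE)/r$ is meaningful, and that it is precisely the vanishing of $c_{1}$ on $\Pic^{0}(A)$ that makes the summands align into a single perfect power.
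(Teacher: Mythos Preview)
Your proof is correct and follows essentially the same route as the paper: reduce to the simple case via an isogeny trivializing $\cE$ up to a line bundle, and then handle the general case through the decomposition of Proposition~\ref{prop:semi-decomp}(2) together with the triviality of $c(\cU_j)$ and $c_1(\alpha_j)$. The only cosmetic differences are that the paper pushes forward along $\sigma$ (using $\sigma_*\sigma^* = \deg\sigma\cdot\id$) rather than citing injectivity of $\sigma^*$, and works directly with total Chern classes rather than Chern roots.
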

\begin{proof}
Assume first that $\cE$ is simple. Then Proposition~\ref{prop:simple sh} yields the existence of an isogeny $\sigma\colon B\to A$ such that $\sigma^\ast\cE\cong \cM^{\oplus \rk \cE}$ for a line bundle $\cM$ on $B$. Then 
\[
c(\sigma^\ast\cE) \equ c(\cM^{\oplus r}) \equ {(1+c_1(\cM))}^r \equ {\left(1 + \frac{c_1(\sigma^*\cE)}{r} \right)}^r\ .
\]
By the projection formula for finite flat morphisms, we then have 
\[
\deg{\sigma} \cdot c(\cE) = \sigma_{\ast}c(\sigma^{\ast}\cE) = \sigma_{\ast}{\left(1 + \frac{c_1(\sigma^*\cE)}{r} \right)}^r\ = \deg{\sigma} \cdot {\left(1 + \frac{c_1(\cE)}{r} \right)}^r\
\]
from which (\ref{eq:sh-chern}) follows at once for simple semihomogeneous bundles.

Dropping the simplicity assumption, let $\cE$ now be an arbitrary semihomogeneous bundle. By Proposition~\ref{prop:semi-decomp}, there exists a \ssh bundle $\cE_1$, topologically trivial line bundles $\alpha_j$, and unipotent bundles $\cU_j$ such that 
\[
	\cE \cong \bigoplus_{j=1}^{s}\cE_{1}\otimes\alpha_j \otimes \cU_{j}\ .
\] 
Since $c(\cU_j) = c(\alpha_{j})=1$ for all $j,$ we have from the previously established case of simple bundles that 
\[
c(\cE) \equ \prod_{j=1}^{s} c(\cE_{1}\otimes\alpha_j \otimes \cU_{j}) \equ \prod_{j=1}^{s} c(\cE_1)^{\rk \cU_j}  \equ \prod_{j=1}^{s} \left(1+ \frac{c_1(\cE_1)}{\rk \cE_1} \right)^{\rk \cU_j}
\equ \left(1 +\frac{c(\cE_1)}{\rk \cE_1}\right)^{\rk \cE}\ ,
\]
and by $\tfrac{c(\cE_1)}{\rk \cE_1}=\tfrac{c(\cE)}{r}$ the proof is complete. 
\end{proof}

\begin{prop}
\label{prop:semi-dir-im}
	Let $\phi : B \to A$ be a surjective homomorphism of abelian varieties, and let $E$ be an ample semihomogeneous bundle on $B$ which is simple.  Then $\phi_{\ast}E$ is an ample semihomogeneous bundle on $A.$
\end{prop}

\begin{proof}
	Throughout the proof we assume without loss of generality that $\phi$ does not factor through an isogeny.  First we prove $\phi_{\ast}E$ is ample.  For all $\alpha \in \widehat{A}$ and all $i > 0, 0 \leq j \leq i$ we have

\begin{equation*}
	H^{j}(R^{i-j}\phi_{\ast}(E \otimes \phi^{\ast}\alpha)) \Rightarrow H^{i}(E \otimes \alpha)
\end{equation*}

We have that $R^{i-j}\phi_{\ast}(E \otimes \phi^{\ast}\alpha) = 0$ for $i > j,$ so when $i > 0$ this forces 
	\begin{equation*}
		H^{i}(\phi_{\ast}E \otimes \alpha) \cong H^{i}(E \otimes \phi^{\ast}\alpha) \cong 0
	\end{equation*}
It follows that $\phi_{\ast}E$ is I.T. of index 0, and is therefore an ample vector bundle as claimed.  In particular, $\phi_{\ast}E$ is nondegenerate.

	To prove semihomogeneity, it suffices by the nondegeneracy of $\phi_{\ast}E$ to show that the correspondence
	\begin{equation*}
		\{(x,\alpha) \in A \times \widehat{A}~:~ t_{x}^{\ast}\phi_{\ast}E \cong \alpha \otimes \phi_{\ast}E\}
	\end{equation*}
	has dimension ${\rm dim}(A).$  For this it suffices in turn to show that for all $\alpha \in \widehat{A}$ there exists $x \in A$ for which $\alpha \otimes \phi^{\ast}E \cong t_{x}^{\ast}\phi_{\ast}E.$  
	
	Let $\alpha \in \widehat{A}$ be given.  Since $E$ is an ample semihomogeneous bundle on $B$ which is simple, the correspondence 
	\begin{equation*}
		\{(y,\beta) \in B \times \widehat{B}~:~ t_{y}^{\ast}E \cong \beta \otimes E\}
	\end{equation*}
	has dimension ${\rm dim}(B).$  Thus there exists $y' \in B$ such that $\phi^{\ast}\alpha \otimes E \cong t_{y'}^{\ast}E.$  Setting $x := \phi(y')$ we then have
	\begin{equation*}
		\alpha \otimes \phi_{\ast}E \cong \phi_{\ast}(\phi^{\ast}\alpha \otimes E) \cong \phi_{\ast}(t_{y'}^{\ast}E) \cong t_{\phi(y')}^{\ast}\phi_{\ast}E = t_{x}^{\ast}\phi_{\ast}E.
	\end{equation*}
\end{proof}

\subsection{Effective global generation of ample semihomogeneous bundles}

Here we analyze the possibilities for global generation of ample semihomogeneous vector bundles. First, relying on a result of Pareschi, we provide an effective upper bound in terms of the rank of the given vector bundles and the geometry of the underlying abelian variety. It turns out that one has reasonably good control whenever the nef cone of the abelian variety in question is rational polyhedral.  In the other direction  we point out using a class of examples from \cite{Op} that there is no uniform bound for effective global generation of ample semihomogeneous bundles. 

\begin{rem}
	\label{rem:gg-prim}
	Every abelian variety $A$ admits an ample semihomogeneous bundle which is not globally generated.  Indeed, let $\rho : A' \to A$ be an isogeny for which $A'$ admits a principal polarization $\Theta,$ and note that the ample semihomogeneous bundle $\rho_{\ast}\Theta$ fails to be globally generated (cf. Example~\ref{ex:ab-var-f1}).  
\end{rem} 

\begin{prop}
	\label{prop:pareschi-gg}
	Let $\cF$ be a semihomogeneous vector bundle on an abelian variety $A,$ and let $\cM$ be an ample line bundle on $A$ such that $c_{1}(\cF \otimes \cM^{-1})$ is ample.  Then $\cF$ is globally generated.
\end{prop}

\begin{proof}
	Since $\cF$ is semihomogeneous, we have from Proposition 2.6 in \cite{KM18} that $\cF \otimes \cM^{-1}$ is I.T. of index 0, i.e.~ that $H^{i}(\cF \otimes \cM^{-1} \otimes \alpha) = 0$ for all $i > 0$ and all $\alpha \in {\rm Pic}^{0}{A}.$  The global generation of $\cF$ then follows from Theorem 2.1 of \cite{Pa}.
\end{proof}

\begin{rem}
	While the conclusion of Theorem 2.1 of \cite{Pa} holds under the weaker hypothesis that $\cF$ is M-regular \cite{PP1}, the latter property is equivalent to being I.T. of index 0 when $\cF$ is semihomogeneous by Proposition 2.6 of \cite{KM18}.
\end{rem}

We introduce an invariant of projective varieties that will help formulate  an effective bound on global generation of ample semihomogeneous bundles.  
 
\begin{defn}[Ample slope]
Let $X$ be a projective variety, $M$ an ample divisor on $X$. We define the \emph{ample slope of $X$ with respect $M$} as 
\[
\mu^{\textrm{amp}}_M(X) \deq \inf\,\{ m\in\N\,|\,\text{$mL-M$ is ample for every ample Cartier divisor $L$ on $X$}\}\ ,
\]
and the \emph{ample slope of $X$} as
\[
\mu^{\textrm{amp}}(X) \deq \inf\, \{\mu^{\textrm{amp}}_m(X)\,|\, \text{$M$ is ample on $X$}\}\ .
\]	
\end{defn}
 
\begin{rmk}
Observe that $\mu^{\textrm{amp}}(X) \geq 2$ for all $X$, at the same time, if $\rho(X)=1$ then  $\mu^{\textrm{amp}}(A)=2$, since $\mu^{\textrm{amp}}_M(X)=2$  when $M$ is taken to be an ample generator of the N\'eron--Severi space. Note that  $\mu^{\textrm{amp}}(X)$ need not be finite.  
\end{rmk}

We are interested in the ample slope of an abelian variety. For starters here we give an example of an abelian surface with infinite ample slope. 

\begin{eg}
	We capitalize on  an example of  Koll\'ar \cite{PAGI}*{Example 1.5.7}). Let $A=E\times E$ be the self-product of an elliptic surface. Using the notation of \emph{loc. cit.} we consider the sequence of divisors
	\[
	A_n \equ nF_1 + (n^2-n_1)F_2 - (n-1)\Delta\ . 
	\]
	Then $(A_n^2)=2$. Given an arbitrary ample divisor $M$ on $A$, we see that
	\[
	(A_n\cdot M) \equ (F_2\cdot M)n^2 + ((F_1\cdot M) - (F_2\cdot M) - (\Delta\cdot M))n + ((f_2\cdot M) + (\Delta\cdot M))\ .  
	\]
	Therefore, 
	\[
	((nA_n - M)^2) \equ 2n^2 -2n(A_n\cdot M) + (M^2)  \equ -(F_2\cdot M)n^3 + \text{lower order terms in $n$}     ,
	\]
	which will become negative once $n$ is large enough since $(F_2\cdot M)\neq 0$ by virtue of the ampleness of $M$. 
\end{eg}

\begin{prop}
	\label{prop:rat-poly-nef}
	If $X$ is an abelian variety whose nef cone is rational polyhedral, then 
	\[
	\mu^{\rm amp}(X) \equ 2\ .
	\]
\end{prop}

\begin{proof}
	By the main theorem of \cite{Bau} there is an isogeny 
	\[
	\sigma \colon Y \deq  \Pi_{j=1}^{l}Y_{j} \to X\ ,
	\]
	where $Y_{1}, \cdots , Y_{l}$ are pairwise non-isogenous abelian varieties with Picard number one. It is easy to see that all $Y_i$'s have  ample slope equal to two.  First we claim that $\mu^{\rm amp}(Y) = 2;$ to prove this, it is enough to show that $\mu^{\rm amp}(Y) \leq 2.$  
	
	For each $1\leq j\leq l$ let $\cM_j$ be an ample generator of ${\rm NS}(Y_{j})$.  If $\cL$ is any ample line bundle on $Y,$ then we have $\cL \cong \cL_{1} \boxtimes \ldots \boxtimes \cL_{l}$ where $\cL_{j}$ is an ample line bundle on $Y_{j}$.  It follows at once that 
	\[
	\cL^{\otimes 2}\otimes \left(\cM_{1} \boxtimes \ldots \boxtimes \cM_{l}\right)^{-1}
	\] 
	is ample, so that 
	\[
	\mu^{\rm amp}(Y) \leq \mu^{\rm amp}_{M_{1} \boxtimes \cdots \boxtimes M_{\ell}}(Y) \leq 2\ ,
	\]
	which proves our  claim.
	
	Next, we will compare $\mu^{\rm amp}(X)$ to $\mu^{\rm amp}(Y)$. To this end,  let $\cM'$ be an ample line bundle on $Y$ for which 
	\[
	\mu_{\cM'}^{\rm amp}(Y) \equ  \mu^{\rm amp}(Y) \equ 2\ ,
	\] 
	and let $\cL$ be an ample line bundle on $X.$  Since ${\rm NS}(X)$ is a finite-index subgroup of ${\rm NS}(Y),$ we have that ${\cM'}^{\otimes d} = \sigma^{\ast}\cM$ for an ample line bundle $\cM$ on $X.$  By assumption, the line bundle 
	\[
	\left(\sigma^{\ast}\cL\right)^{\otimes 2d} \otimes {\cM'}^{\otimes -d}
	\]
	is ample,  and this implies at once that $\cL^{\otimes 2}\otimes \cM^{\otimes -1}$ is ample.  It follows that
	\begin{equation*}
	\mu^{\rm amp}(X) \leq \mu^{\rm amp}_{M}(X) \leq 2.
	\end{equation*}
\end{proof}

\begin{prop}\label{prop:ash gg if finite ample slope}
	Let $\cE$ be an ample semihomogeneous vector bundle on an abelian variety $A$ with finite ample slope. If
	\[
	m \dgeq \rk \cE \cdot \mu^{\textrm{amp}}(A)\ 
	\]
	then $\Sym^m\cE$ is globally generated.
\end{prop}

\begin{proof}
	We will prove that $\cE^{\otimes m}$ is globally generated in the appropriate range, this then implies the global generation of $\Sym^m\cE$ as well. Let $\cL$ be an ample line bundle on $A$ realizing the minimal ample slope.  By Proposition \ref{prop:pareschi-gg}, it is enough to show that $c_{1}({\cE}^{\otimes m} \otimes {\cL}^{-1})$ is ample.  We compute
	\[
	c_1(\cE^{\otimes m}\otimes \cL^{-1}) \equ m\cdot c_1(\cE) - \rk \cE\cdot c_1(\cL)\,
	\]
	which is ample by the definition of ample slope if $m\geq \rk\cE \cdot \mu^{\textrm{amp}}_M(X)  \equ \rk \cE \cdot \mu^{\textrm{amp}}(X)$, as required.  
\end{proof}

\begin{thm}\label{thm:Fujita bound for ash on rtl poly}
Let $A$ be an abelian variety with a finite rational polyhedral nef cone, $\cE$ an ample semihomogeneous vector bundle on $A$. Then  
\[
f_\cE \dleq 2{\rm rk}\cE\ . 
\] 
\end{thm}
\begin{proof}
The statement is the conjunction of  Propositions~\ref{prop:rat-poly-nef} and \ref{prop:ash gg if finite ample slope}. 
\end{proof}

\begin{rmk}
The argument of the proof of Proposition~\ref{prop:ash gg if finite ample slope} yields a numerical condition of somewhat different flavour under the stronger assumption that $A$ has Picard number 1.  Let $\cL$ be an ample line bundle $\cL$ generating  the N\'eron--Severi group of $A$. With this assumption one can prove that an ample semihomogeneous vector bundle $\cE$ on $A$ is globally generated if 
\[
\mu_{\cL}(\cE) > c_{1}(\cL)^{g}\ ,
\]
where $g$ stands for the dimension of $A$. 

Again, one proceeds via Proposition \ref{prop:pareschi-gg} by verifying that $c_1(\cE\otimes \cL^{-1})$ is ample. Our hypothesis on the N\'eron--Severi group 	
of $A$ implies that $c_{1}(\cE \otimes {\cL}^{-1})=sc_{1}(\cL)$ for some $s \in \Z$; the remaining task is then to show that $s > 0$. On the one hand, 
\[
\mu_{\cL}(\cE \otimes \cL^{-1}) \equ \mu_{\cL}(\cE) -  c_1(\cL^{-1})^g > 0
\]
by assumption, on the other hand 
\[
\mu_{\cL}(\cE \otimes \cL^{-1}) \equ \frac{sc_{1}(\cL)^{g}}{\rk~{\cE})}-c_{1}(\cL)^{g} \equ (s/\rk\cE -1)\cdot c_1(\cL)^g \ ,
\]
and the statement follows. 
\end{rmk}

\begin{rem}
	By Theorem 1.1 of \cite{BaSz} it follows that $\cL_1\otimes\cL_2$ is globally generated provided $\cL_1,\cL_2$ are ample line bundles on an abelian variety. Consequently, if $\cE$ is a direct sum of ample line bundles on an abelian variety then $\cE^{\otimes 2}$ hence also $\Sym^2\cE$ are globally generated.  
\end{rem}

\begin{rem}
	\label{rem:zhang}
	Example 3.4 of \cite{PP2} exhibits vector bundles $E$ and $F$ on a principally polarized abelian variety $(A,\Theta)$ such that $E \otimes F$ is not globally generated and $E,F$ are both I.T. of index 0; it follows that ${\rm Sym}^{2}(E \oplus F)$---which contains $E \otimes F$ as a direct summand---is not globally generated, although $E \oplus F$ is I.T. of index 0.  Since $E$ and $F$ can be shown to have different slopes with respect to $\Theta,$ their direct sum $E \oplus F$ is not slope-semistable with respect to $\Theta,$ and is therefore not semihomogeneous by Proposition \ref{prop:sh-stab}.
\end{rem}

The following observation shows that although $\Sym^2\cE$ might not be globally generated for an ample semihomogeneous vector bundle, a suitable pullback will always be so. 

\begin{prop} \label{prop:pullback-tensor-square}
	Let $\cE$ be an ample semihomogeneous bundle on $A$, and let $m \geq 2$ be an integer.  Then there exists an isogeny $\sigma\colon A'\to A$ such that $\sigma^{\ast}\cE^{\otimes m}$ is globally generated.  In particular, $\sigma^{\ast}\Sym^{m}(\cE)$ is globally generated.
\end{prop}

\begin{proof}
	By Proposition \ref{prop:semi-decomp}, the indecomposable summands of $\cE^{\otimes m}$ are all of the form $\cU' \otimes \cE_{j_1} \otimes \cdots \otimes \cE_{j_m}$ for $\cU'$ unipotent and $\cE_{j_1}, \cdots ,\cE_{j_m}$ simple and semihomogeneous.  Given that an extension of ample globally generated semihomogeneous bundles on an abelian variety is globally generated, we are reduced to considering $\cE_{j_1} \otimes \cdots \otimes \cE_{j_m}.$  By Proposition~\ref{prop:semi-decomp} (ii),  we have  $\cE_{j_k} \cong \cE_{j_1} \otimes \alpha_k$ for some $\alpha_k \in \widehat{A}$ for all $2 \leq k \leq m$.  If ${\alpha}'$ is any $m$-th root of $\alpha_{2} \otimes \cdots \otimes \alpha_{m}$, then 
	\[
	\cE_{j_1} \otimes \cdots \otimes \cE_{j_m} \cong (\cE_{j_1} \otimes {\alpha}')^{\otimes m} \ .
	\]
	We may therefore assume without loss of generality that $\cE$ is simple.
	
	Let $\sigma : A' \to A$ be an isogeny and let $\cL$ be an ample line bundle on $A'$ such that $\cE \cong \sigma_{\ast}\cL.$  We have that
	\begin{equation*}
	\sigma^{\ast}(\cE^{\otimes m}) \cong (\sigma^{\ast}\sigma_{\ast}\cL)^{\otimes m} \cong (\oplus_{x \in {\rm ker}(\sigma)}t_{x}^{\ast}\cL)^{\otimes m}
	\end{equation*}
	It then follows from the standard result for ample line bundles on abelian varieties that $\sigma^{\ast}(\cE^{\otimes m})$ is globally generated.  
\end{proof}

We end this subsection with an example showing that the global generation can fail for low tensor powers of ample semihomogeneous bundles.  Following the notation in \cite{Op}, let $(A,\Theta)$ be a principally polarized abelian variety of dimension $g \geq 1$ with $\Theta$ symmetric (i.e.~ $(-1)^{\ast}\Theta \cong \Theta$) and let $a > 1$ be an odd integer; there exists a simple semihomogeneous vector bundle $\mathbf{W}_{a,1}$ on $A$ satisfying the following:
\begin{equation}
\label{eq:rk-det}
{\rm rk}(\mathbf{W}_{a,1}) = a^{g} \hskip50pt \det{\mathbf{W}_{a,1}} \cong a^{g-1}\Theta .
\end{equation}
We also have (see p.~ 14 of \cite{Op}) that
\begin{equation}
\label{eq:ch-w}
{\rm ch}(\mathbf{W}_{a,1}) = a^{g} \cdot {\rm exp}\biggl(\frac{1}{a}\Theta\biggr)\ .
\end{equation}
The vector bundles $\mathbf{W}_{a,1}$ are identical to the Verlinde bundles $E_{a,1}$ defined by Popa \cite{Popa} (for other integers $r$ the analogous bundles $\mathbf{W}_{a,r}$ occur as indecomposable factors of the Verlinde bundles $E_{a,r}$). 

\begin{prop}\label{prop:counterexample}
	$\mathbf{W}_{a,1}^{\otimes k}$ fails to be globally generated for all odd $k\leq a$.   
\end{prop}

\begin{proof}
	Since $\mathbf{W}_{a,1}$ is semihomogeneous with ample determinant, it is ample by Proposition 2.6 of \cite{KM18} and therefore it suffices to show that $h^{0}(\mathbf{W}_{a,1}^{\otimes k}) <  \rk(\mathbf{W}_{a,1}^{\otimes k})+\dim(A)$ for all odd $k \leq a.$  We fix such a $k$ for the remainder of the proof.
   
	By another application of \textit{loc.~ cit.} the ampleness and semihomogeneity of $\mathbf{W}_{a,1}$ imply that all its tensor powers are I.T. of index 0.  This, combined with (\ref{eq:ch-w}), gives 
	\begin{equation*}
	h^{0}(\mathbf{W}_{a,1}^{\otimes k}) \equ \chi(\mathbf{W}_{a,1}) \equ {\rm ch}_{g}(\mathbf{W}_{a,1}^{\otimes k}) \equ  a^{kg} \cdot \biggl[{\rm exp}\biggl(\frac{k}{a}\Theta\biggr)\biggr]_{g}
   \equ (ka^{k-1})^{g} \ .
	\end{equation*}
	At the same time it follows from (\ref{eq:rk-det}) that 
	\begin{equation*}
	{\rm rk}(\mathbf{W}_{a,1}^{\otimes k})+\dim(A) = a^{kg}+g
	\end{equation*}
	The desired result then follows from insisting on the inequality 
	\[
	(ka^{k-1})^{g} < (a^{k})^{g} + g\ ,  
	\]	
	or, equivalently,
    \begin{equation*}
    \biggl(\frac{k}{a}\biggr)^{g} < 1 + \frac{g}{a^{kg}}\ .
    \end{equation*}	
	which certainly holds under our hypothesis.
\end{proof}

\begin{rem}
Proposition~\ref{prop:counterexample} exhibits an ample semihomogeneous bundle $\cE$ on an abelian variety of dimension $g$ for which $f_{\cE} > ({\rm rk}~{\cE})^{1/g}.$  However, it is still open whether there exists a constant $\gamma$ depending only on the underlying abelian variety $A$ such that 
\[
f_\cE \leq \gamma~ \rk \cE
\]
for all ample semihomogeneous bundles $\cE$ on $A$. In Theorem~\ref{thm:Fujita bound for ash on rtl poly} we saw that one can take $\gamma=2$ when $A$ has a rational polyhedral nef cone. 
\end{rem}

\section{Proof of Theorem A}
	\label{sec:thm-a}

Before proceeding to the proof of our main result, we make a brief detour to discuss the use of fibrations in ensuring global generation. The statements below are surely known to experts; we include them for the sake of completeness and the lack of a precise reference. 
 
\begin{lem}
	\label{lem:adj-surj}
	Let $f\colon Y\to Z$ be a morphism of projective varieties, $\cE$ a vector  bundle on $Y$. Assume furthermore that the adjunction map $f^\ast f_\ast\cE\to\cE$ is surjective. 
	\begin{enumerate}
		\item If $\Sym^m (f_\ast\cE)$ is globally generated for some positive integer $m$ then so is $\Sym^m\cE$.  
		\item If $\omega_Z\otimes f_\ast(\omega_{Y/Z}\otimes \cE)$ is globally generated then so is $\omega_Y\otimes \cE$. 
	\end{enumerate}
\end{lem}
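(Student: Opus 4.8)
The plan is to prove both parts by the same device: realize the target sheaf as a quotient of a sheaf that is manifestly globally generated, and then invoke two elementary facts, namely that the pullback $f^{*}\cG$ of a globally generated sheaf is globally generated, and that a quotient of a globally generated sheaf is again globally generated.

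For part (1) I would apply the right-exact functor $\Sym^{m}$ to the surjection $f^{*}f_{*}\cE\to\cE$. Since $\Sym^{m}$ preserves surjections and commutes with pullback, this produces a surjection $f^{*}\Sym^{m}(f_{*}\cE)\cong\Sym^{m}(f^{*}f_{*}\cE)\twoheadrightarrow\Sym^{m}\cE$. As $\Sym^{m}(f_{*}\cE)$ is globally generated on $Z$ by hypothesis, so is its pullback, and hence so is the quotient $\Sym^{m}\cE$.

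For part (2) the organizing identity is $\omega_{Y}\cong\omega_{Y/Z}\otimes f^{*}\omega_{Z}$, so that $\omega_{Y}\otimes\cE\cong f^{*}\omega_{Z}\otimes(\omega_{Y/Z}\otimes\cE)$. First I would apply the projection formula to the globally generated bundle $\omega_{Z}\otimes f_{*}(\omega_{Y/Z}\otimes\cE)$ on $Z$ to obtain
\[
f^{*}\bigl(\omega_{Z}\otimes f_{*}(\omega_{Y/Z}\otimes\cE)\bigr)\;\cong\;f^{*}\omega_{Z}\otimes f^{*}f_{*}(\omega_{Y/Z}\otimes\cE),
\]
which is globally generated on $Y$, being the pullback of a globally generated sheaf. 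Next I would apply the adjunction hypothesis to the bundle $\omega_{Y/Z}\otimes\cE$: the counit $f^{*}f_{*}(\omega_{Y/Z}\otimes\cE)\to\omega_{Y/Z}\otimes\cE$ is surjective. Tensoring this surjection with the line bundle $f^{*}\omega_{Z}$ preserves surjectivity and yields
\[
f^{*}\omega_{Z}\otimes f^{*}f_{*}(\omega_{Y/Z}\otimes\cE)\;\twoheadrightarrow\;f^{*}\omega_{Z}\otimes\omega_{Y/Z}\otimes\cE\;\cong\;\omega_{Y}\otimes\cE.
\]
Chaining this with the displayed isomorphism exhibits $\omega_{Y}\otimes\cE$ as a quotient of a globally generated sheaf, so it is globally generated.

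The step that requires care is the second one: the counit surjectivity must be used for the twisted bundle $\omega_{Y/Z}\otimes\cE$ that actually appears in the conclusion, and it must be taken from the adjunction hypothesis applied to that bundle, rather than imported from the untwisted bundle $\cE$ by assuming the fibres of $f$ have trivial canonical bundle. Once the surjection is set up for $\omega_{Y/Z}\otimes\cE$, the relative canonical twist is absorbed cleanly into $\omega_{Y}=\omega_{Y/Z}\otimes f^{*}\omega_{Z}$ by the projection formula, the only remaining bookkeeping being that tensoring by the line bundle $f^{*}\omega_{Z}$ preserves both surjectivity and global generation, which is routine. No positivity or triviality assumption on the fibres enters.
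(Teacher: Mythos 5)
Your proof is correct and coincides with the paper's own argument step for step: part (1) applies $\Sym^m$ to the counit surjection $f^{\ast}f_{\ast}\cE \to \cE$ and uses that $\Sym^m$ commutes with pullback while quotients and pullbacks preserve global generation, and part (2) combines the projection formula $f_{\ast}(\omega_Y\otimes\cE)\cong\omega_Z\otimes f_{\ast}(\omega_{Y/Z}\otimes\cE)$ with the counit surjectivity, exactly as in the paper. Your explicit remark that the counit surjectivity must be invoked for the twisted bundle $\omega_{Y/Z}\otimes\cE$ (so that, strictly speaking, the hypothesis should be read as applying to that bundle) is a point the paper's proof leaves implicit, so your write-up is if anything slightly more careful.
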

\begin{proof}
	For (1), the adjunction morphism is surjective, so the same is true of the induced map ${\Sym}^{m}(f^\ast f_\ast{\cE}) \to {\rm Sym}^{m}\cE.$  It follows that the composition 
	\[
	f^*({\rm Sym}^{m}f_\ast \cE) \,\simeq\, {\rm Sym}^{m}(f^\ast f_\ast{\cE}) {\longrightarrow} {\rm Sym}^{m}\cE
	\]
	is surjective as well. As  $f^*({\rm Sym}^{m}f_\ast \cE)$ is globally generated by assumption, we have that ${\rm Sym}^{m}{\cE}$ is globally generated as claimed.
	Part (2) follows by a similar argument using the projection formula 
	\[
	f_\ast(\omega_Y\otimes \cE) \cong  f_\ast(\omega_{Y/Z}\otimes f^\ast\omega_Z\otimes \cE) \cong  \omega_Z \otimes f_\ast(\omega_{Y/Z}\otimes \cE) \ ,
	\]
	and the surjectivity of the adjunction map. 
\end{proof}
 
\begin{rem}[Variant for continuous global generation]
	The above statements have natural counterparts in the setting of  continuous global generation (cf. \cite{PP1}). Here is an example. Let $Y$,$Z$ be projective varieties, $Z$ irregular, $f\colon Y\to Z$ be  an arbitrary morphism of varieties. Let $\cF$ be a coherent sheaf, $\cL$ a line bundle 
	on $Y$. Assume that 
	\begin{enumerate}
		\item $f_\ast\cF$ and $f_\ast \cL$ are continuously globally generated; 
		\item the adjunction morphisms $f^\ast f_\ast\cF\to \cF$ and $f^\ast f_\ast \cL\to \cL$ are surjective. 
	\end{enumerate}
	Then $\cF\otimes \cL$ is globally generated. 
	
	Indeed, this is a slight variation of \cite{PP1}*{Proposition 2.12}. Due to \emph{loc.~cit.} applied to the identity morphism of $Z$, the sheaf $f_\ast \cF\otimes f_\ast \cL$ is  globally generated. This implies that 
	\[
	f^\ast f_\ast\cF \otimes f^\ast f_\ast \cL \,\cong\, f^*(f_\ast\cF \otimes f_\ast \cL)
	\]
	is globally generated as well. By the surjectivity of the adjunction morphisms in (2), the tensor product 
	\[
	f^\ast f_\ast\cF \otimes f^\ast f_\ast \cL \stackrel{\otimes}{\longrightarrow} \cF\otimes \cL
	\]
	is surjective, too. Therefore $\cF\otimes \cL$ is globally generated as promised.  
\end{rem}

Recall the following definition. 

\begin{defn}
We say a line bundle $\cL$ on a projective variety $X$ is \emph{globally generated in codimension $k$} if the base locus ${\rm Bs}|\cL|$ is of codimension at least $k+1$ in $X.$	
\end{defn}

Next, we define Fujita numbers of ample sheaves, which leads to a quantified version of the circle of ideas around Fujita's conjecture. 

\begin{defn}[Fujita numbers]
Let $X$ be a smooth projective variety, $\cE$ an ample coherent sheaf on $X$, and let  $0\leq k\leq \dim X$ be a natural number. We define 
the \emph{$k$\textsuperscript{th} Fujita number of $\cE$} as 
\[
f_{\cE,k} \deq \min \{m\geq 1\,|\, \omega_X\otimes \Sym^{m'}\cE \text{ is  globally generated in codimension $k$ for all $m'\geq m$} \}\ .
\]
Along the same lines we define the Fujita number  $f_{X,k}$ of a given smooth projective variety $X$  as 
\[
f_{X,k} \deq \max\{ f_{\cA,k}\,|\, \text{$\cA$ is an ample line bundle on $X$}\}\ ,
\]
while for a smooth fibration $\pi\colon X\to Y$ we set $f_{\pi,k} \deq \max \st{f_{X_y,k}\colon y\in Y}$.
\end{defn}

\begin{rmk}
Note that the sequences $\{f_{\cE,k}\}_{k}, \{f_{X,k}\}_{k}, \{f_{\pi,k}\}_{k}$ are all nondecreasing.  Going forward we write $f_{\cE,\dim{X}}$ as $f_{\cE}$ (similarly for $f_{X,\dim{X}}$ and $f_{\pi,\dim{X}}$). 
\end{rmk}

\begin{lem}
	If $X$ is a smooth projective variety and $\cE$ is an ample vector bundle on $X,$ then $f_{\cE,k}$ is finite for $0 \leq k \leq \dim{X}.$
\end{lem}

\begin{proof}
	It suffices to check that $f_{\cE}$ is finite.  By (iii) of Theorem 6.1.10 in \cite{PAGII} there exists a positive integer $m$ such that $\omega_{X} \otimes {\rm Sym}^{m'}\cE$ is globally generated for all $m' \geq m$.  Therefore $f_{\cE} \leq m,$ so that $f_{\cE}$ is finite as claimed.
\end{proof}
 
\begin{rmk}
Given a fibration $\pi\colon X\to Y$ of varieties, we write $X_\pi$ for the general fibre of $\pi$. Analogously, if $\cL$ is a line bundle on $X$ then $\cL_\pi$ stands for $\cL|_{X_\pi}$. 
\end{rmk}

\begin{thm}\label{thm:codim of base loci}
	Let $X$ be a smooth projective variety with $K_{X} =_{\rm num} 0,$ and let $\cL$ be an ample line bundle on $X$.  Then for $1 \leq k \leq {\rm dim}~{X}$ we have that
	\begin{enumerate}
		\item $\cL^{\otimes rs}$ is globally generated in codimension $k$ for 
		\[
		r \geq f_{{\rm alb}_{X},k}\ \ \text{ and } \ s \geq f_{{\rm alb}_{X\ast}(\cL^{\otimes f_{\rm alb_{X},k}}),k}\
		\]
		\item If, in addition, $\Nef(\Alb(X))$ is rational polyhedral then  $\cL^{\otimes rs}$ is globally generated in codimension $k$ for 
		\[
		r \dgeq f_{{\rm alb}_{X},k}\ \ \text{ and }\ \ s \dgeq 2\cdot \chi(X_{\alb_X},\cL_{\alb_X}^{\otimes f_{\rm alb_{X},k}})\ .
		\]
	\end{enumerate}
\end{thm}

\begin{proof}
	 Consider the  diagram  (\ref{eq:bb-diag}) from Proposition~\ref{prop:fib-alb}  
	\begin{equation}\label{eq:diagram in the proof}
	\xymatrix{
		{Y \times A' \times A''} \ar[rr]^-{\nu} \ar[d]^-{{\rm pr}_{A' \times A''}}  	& 					&X \ar[d]_-{{\rm alb}_{X}} 	\\
		{A' \times A''} \ar[r]^-{{\rm pr}_{A'}}            						&A' \ar[r]^-{\rho}		&{\rm Alb}(X) \ .
	}
	\end{equation}	
	Define $\cF_1 \deq  (\pr_{A'\times A''})_\ast \nu^\ast\cL$.  Since $\nu^{\ast}\cL$ is ample and there are no non-trivial correspondences between $Y$ and $A'\times A''$, $\nu^\ast\cL=\cM_1\boxtimes\cM_2$ for ample line bundles $\cM_1$ on $Y$ and $\cM_2$ on $A'\times A''$.  
	
	We claim that $H^{0}(Y,\cM_{1}) \neq 0.$  If $\alpha : X' \to A'$ is the base change of ${\rm alb}_{X}$ via $\rho,$ then we have the factorization
	\begin{equation}\label{eq:diagram in the proof}
	\xymatrix{
		{Y \times A' \times A''} \ar[r]^-{\widetilde{\nu}} \ar[d]^-{{\rm pr}_{A' \times A''}} \ar@/^2pc/[rr]^-{\nu}  	&X' \ar[d]_-{\alpha} \ar[r]^-{\widetilde{w}} 		&X \ar[d]_-{{\rm alb}_{X}} 	\\
		{A' \times A''} \ar[r]^-{{\rm pr}_{A'}}            								&A' \ar[r]^-{\rho}						&{\rm Alb}(X) \ .
	}
	\end{equation}	
	induced by the universal property of the fiber product.  For all $a' \in A'$ we have that
	\begin{equation}
		H^{0}(X'_{a'},\widetilde{w}^{\ast}\cL|_{X'_{a'}}) \cong H^{0}(X_{\rho(a')},\cL_{\rho(a')})
	\end{equation}
	Together with this isomorphism, our hypothesis on the base locus of $\cL_{\rho(a')}$ implies that the base locus of $\widetilde{w}^{\ast}\cL|_{X'_{a'}}$ is of codimension $k$; in particular $H^{0}(X'_{a'},\widetilde{w}^{\ast}\cL|_{X'_{a'}}) \neq 0.$  We then have that
	\begin{equation}
		0 \neq H^{0}(X'_{a'},\widetilde{w}^{\ast}\cL|_{X'_{a'}})  \subset H^{0}(Y \times \{a'\} \times A'',\widetilde{\nu}^{\ast}\widetilde{w}^{\ast}\cL|_{Y \times \{a'\} \times A''})	
	\end{equation}
	\begin{equation*}
		 \cong H^{0}(Y \times \{a'\} \times A'',\nu^{\ast}\cL|_{Y \times \{a'\} \times A''}) \cong H^{0}(Y,\cM_{1}) \otimes H^{0}(\{a'\} \times A'', \cM_{2}|_{\{a'\} \times A''}) 
 	\end{equation*}
	It then follows that $H^{0}(Y,\cM_{1}) \neq 0$ as desired.
	
	Pushing forward via the projection to $A'\times A''$ yields
	\[
	\cF_{1} \cong (\pr_{A'\times A''})_\ast(\cM_1\boxtimes\cM_2) \cong ((\pr_{A'\times A''})_\ast(\pr_Y)^\ast\cM_1)\otimes \cM_2 \cong H^0(Y,\cM_1)\otimes \cM_2\ ,
	\]
	Since $\cM_2$ is ample and $H^{0}(Y,\cM_1) \neq 0,$ it follows that $\cF_2\deq (\pr_{A'})_\ast\cF_1$ is an ample semihomogeneous bundle on $A'$ thanks to Proposition \ref{prop:semi-dir-im}.  The morphism $\rho\colon A'\to\Alb(X)$ is an isogeny, so $\rho_\ast\cF_2$ is an ample semihomogeneous vector bundle on $\Alb(X)$. By the commutativity of the diagram (\ref{eq:diagram in the proof}), 
	\[
	\rho_\ast\cF_2 \cong (\alb_X)_\ast(\nu_\ast\nu^\ast\cL) \cong (\alb_X)_\ast(\cL\otimes \nu_\ast\cO_{Y\times A'\times A''})\ .
	\]
	As $\nu$ is finite and \'etale, $\nu_\ast\cO_{Y\times A'\times A''}\cong \cO_X\oplus \cG$ for a vector bundle $\cG$ on $X$, therefore 
	\[
	\rho_\ast\cF_2 \cong (\alb_X)_\ast\cL \oplus (\alb_X)_\ast(\cL\otimes \cG)\ .
	\]
	Being a direct summand of an ample semihomogeneous vector bundle, $(\alb_X)_\ast\cL$ is ample and semihomogeneous itself.   
	
	In order to prove (1), observe that our assumption on the base loci of the $\cL_{a}$ implies that the map $(\alb_X)^\ast(\alb_X)_\ast\cL \to \cL$ is surjective away from a subset of codimension $k$, so the same is true of its symmetric power 
	\[
	\Sym^s ({\alb_X}^\ast{\alb_X}_\ast(\cL^{\otimes r})) \to \cL^{\otimes rs}
	\]
	It follows that $\cL^{\otimes rs}$ is globally generated in codimension $k$ as desired, which yields (1). 
	
	To show statement (2), we invoke Theorem~\ref{thm:Fujita bound for ash on rtl poly}, which implies that $\Sym^{s} \left({\alb_X}_\ast (\cL^{\otimes r})\right)$ is globally generated provided $s \geq 2\cdot  \rk {\alb_X}_\ast(\cL^{\otimes r})$.   
	We have 
	\[
		\rk {\alb_X}_\ast(\cL^{\otimes r}) \equ h^0(X_{\alb_X},\cL_{\alb_X}^{\otimes r}) \equ \chi(X_{\alb_X},\cL_{\alb_X}^{\otimes r})
	\]
	by cohomology and base change and Kodaira vanishing.  This finishes the proof. 
\end{proof}

 \section{The case of moduli spaces of stable sheaves on abelian surfaces}
	\label{sec:stable-sheaves}

We now turn to Theorem~B.  Our first task is to gather the results of Mukai \cite{Muk2,Muk3}, Yoshioka \cite{Yo} and Riess \cite{Riess} that will be used in its proof.

For a smooth projective surface $S$ with trivial canonical bundle, i.e.~ an abelian or K3 surface, Mukai defines in \cite{Muk2,Muk3} an even unimodular symmetric bilinear form  on the singular cohomology of $S$---now called the Mukai lattice---which plays a crucial role in constructing moduli spaces of stable sheaves on $S$. More concretely, write 
\[
H^{\ev}(S;\Z) \deq \bigoplus_{i=0}^2 H^{2i}(S;\Z)\ \ , \text{ and } \scal{v,w} \deq \int_X -v_0w_4+v_2w_2-v_4w_0\ \ \text{for $v,w\in H^{\ev}(S;\Z)$}\ .
\]
For a coherent sheaf $\cE$ on $S,$ one defines the Mukai vector as $v(\cE) \deq \ch (\cE) \sqrt{\td(S)}$, which simplifies to $\ch (\cE)$ for $S$ abelian.  A Mukai vector is said to be \textit{primitive} if its entries are relatively prime.
 
The moduli space of Gieseker-stable sheaves on $S$ with Mukai vector $v$ with respect to a polarization $H$ will be denoted by ${\cM}_H(v)$, and its Gieseker compactification will be denoted by $\overline{\cM_H(v)}$. We have the following:

\begin{theorem}[Mukai, Yoshioka]\label{thm:MY}
The moduli space ${\cM}_H(v)$ is a smooth quasiprojective variety of dimension $\scal{v,v}+2$. If $H$ lies outside of a certain locally finite set of hyperplanes of the ample cone of $S$ and $v$ is a primitive element of the Mukai lattice of $S$, then $\overline{\cM_H(v)}=\cM_H(v)$ is an irreducible projective manifold and $K_{\cM_{H}(v)} \cong \cO_{\cM_{H}(v)} .$
\end{theorem}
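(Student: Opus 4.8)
The plan is to treat the four assertions separately, drawing on the local deformation theory of simple sheaves for smoothness and the dimension count, a wall-crossing analysis for projectivity, Mukai's symplectic form for the triviality of the canonical bundle, and Yoshioka's irreducibility theorem for the last claim; I would cite \cite{Muk2,Muk3,Yo} for the global inputs rather than reprove them.

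First I would establish smoothness and the dimension formula. For a Gieseker-stable sheaf $\cE$ the Zariski tangent space to $\cM_H(v)$ at $[\cE]$ is $\Ext^1(\cE,\cE)$, while the obstruction to deforming $\cE$ lives in the trace-free part $\Ext^2(\cE,\cE)_0$. Because $\omega_S \cong \cO_S$, Serre duality identifies $\Ext^2(\cE,\cE)_0$ with the dual of $\Hom(\cE,\cE)_0$; the latter vanishes since a stable sheaf is simple, so $\cM_H(v)$ is smooth. Its dimension equals that of $\Ext^1(\cE,\cE)$, which I would compute from the Mukai-pairing form of Riemann--Roch, $\chi(\cE,\cE) = -\scal{v,v}$, together with the facts that $\Hom(\cE,\cE)$ and $\Ext^2(\cE,\cE)$ are one-dimensional; this gives $\scal{v,v}+2$.

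For projectivity I would observe that the hyperplanes to be avoided are precisely the walls in the ample cone across which a strictly semistable sheaf with Mukai vector $v$ can appear; for primitive $v$ and $H$ off these walls, Gieseker-semistability coincides with Gieseker-stability. Hence the Gieseker compactification $\overline{\cM_H(v)}$---which is always projective and parametrizes S-equivalence classes of semistable sheaves---consists entirely of stable points, so $\overline{\cM_H(v)} = \cM_H(v)$, and together with the smoothness above this is a projective manifold. Triviality of the canonical bundle I would deduce from Mukai's holomorphic two-form $\sigma$ on $\cM_H(v)$, obtained from the Yoneda product $\Ext^1(\cE,\cE) \otimes \Ext^1(\cE,\cE) \to \Ext^2(\cE,\cE) \to H^2(\cO_S) \cong \C$; since $\sigma$ is everywhere nondegenerate and $\cM_H(v)$ has even dimension $\scal{v,v}+2$ (the Mukai lattice being even), the top exterior power of $\sigma$ is a nowhere-vanishing section of $K_{\cM_H(v)}$, whence $K_{\cM_H(v)} \cong \cO_{\cM_H(v)}$.

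The main obstacle is irreducibility, which is the deepest ingredient and is due to Yoshioka \cite{Yo}. The strategy there is to transport $\cM_H(v)$, by means of Fourier--Mukai equivalences that preserve the Mukai pairing and, for suitably generic polarizations, preserve stability, to a moduli space with the same value of $\scal{v,v}$ whose irreducibility is already known---ultimately a Hilbert scheme of points $S^{[n]}$ (or, over an abelian surface, a standard model built from it). Verifying that stability is respected under these transforms and that the genericity of $H$ persists is the delicate point; I would invoke \cite{Yo} for this step rather than reproduce the argument.
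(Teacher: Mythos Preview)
Your proposal is correct and aligns with the paper's own treatment: both defer to Mukai \cite{Muk3} for smoothness, the dimension count, and the holomorphic symplectic structure (hence trivial canonical bundle), and to Yoshioka for the compactness/irreducibility under the genericity hypothesis on $H$. Your version simply unpacks the cited arguments in more detail than the paper, which gives only the references; the underlying approach is the same.
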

\begin{proof}
The first statement and the existence of a holomorphic symplectic structure on ${\cM}_H(v)$ (which implies the triviality of its canonical bundle) is the content of \cite{Muk3}; see also \cite{HL}, {Theorem 10.4.3}. The compactness of ${\cM}_H(v)$ under the given hypotheses is due to Yoshioka \cite{Yo2}. 
\end{proof}

For the rest of this section we will assume $S$ is an abelian surface.  Recall that for a positive integer $q,$ the Albanese morphism of the Hilbert scheme $S^{[q]}$ parametrizing length-$q$ subschemes of $S$ is the addition map $\sigma_{q}: S^{[q]} \to S$; the \textit{generalized Kummer variety} ${\rm Kum}^{q}(S)$ is defined to be $\sigma_{q}^{-1}(0),$ and it is a compact hyperk\"{a}hler manifold.  Since $\sigma_{q}$ is \'{e}tale-locally trivial, any closed fiber of $\sigma$ is isomorphic to ${\rm Kum}^{q}(S).$ 

Let $v = r+c_1+a\omega\in H^{\ev}(S;\Z)$ be a Mukai vector with $c_1\in \NS(S)$. Following \cite{Yo}, we call $v$ \emph{positive} if $r>0$ (as an element of $H^0(S;\Z) \simeq \Z$) or $r=0$, $c_1$ is effective and $a\neq 0$, or if $r=c_1=0$ and $a<0$. Then we have 

\begin{theorem}[Yoshioka, \cite{Yo}, Theorems 0.1 and 0.2]\label{thm:Yoshioka}
Let $S$ be an abelian surface, $v$  a primitive positive Mukai vector with $c_1(v)\in \NS(S)$ and $\scal{v,v}\geq 6$, $H$ an ample divisor on $S$. Then 
\begin{enumerate}
	\item $\overline{{\cM}_H(v)}={\cM}_H(v)$, and   the Albanese morphism  of ${\cM}_H(v)$ is a smooth fibration over $S \times \widehat{S}$.
	\item If $H$ is sufficiently general, then any fibre $K_H(v)$ of the Albanese morphism of ${\cM}_H(v)$ is a compact hyperk\"{a}hler manifold which is deformation-equivalent to ${\rm Kum}^{\frac{\langle{v,v}\rangle}{2}-1}(S).$
\end{enumerate}	
\end{theorem}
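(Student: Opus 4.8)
The plan is to follow Yoshioka and produce the Albanese morphism geometrically from the natural action of $S \times \widehat{S}$ on the moduli space, and then to pin down the generic fibre by a deformation argument reducing to the Hilbert scheme. First I would record that $S \times \widehat{S}$ acts on $\cM_H(v)$ via
\[
(a,\xi) \cdot E \deq t_a^\ast E \otimes \xi, \qquad a \in S,\ \xi \in \widehat{S} = \Pic^0(S),
\]
and check that this preserves both the Mukai vector and Gieseker stability: translations are homotopic to the identity and so act trivially on $H^\ast(S;\Z)$, while tensoring by a degree-zero line bundle leaves $\ch$ unchanged, so $v$ is preserved; and stability is insensitive to translation (as $t_a^\ast H \equiv H$ numerically) and to twisting by a line bundle. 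The Albanese morphism is then realized as $a_v \colon \cM_H(v) \to S \times \widehat{S}$ whose $\widehat{S}$-component sends $E$ to $\det(E)$ normalised against a fixed reference sheaf (landing in $\Pic^0(S)$), and whose $S$-component is the analogous determinant of the Fourier–Mukai transform $\widehat{E}$, valued in $\Pic^0(\widehat{S}) \cong S$. The decisive feature of this description is that $a_v$ intertwines the action above with translation on $S \times \widehat{S}$ through an endomorphism that is an isogeny, which one verifies using the primitivity of $v$.

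For part (1), Theorem~\ref{thm:MY} already supplies compactness $\overline{\cM_H(v)} = \cM_H(v)$ and smoothness of dimension $\scal{v,v}+2$ for $v$ primitive and $H$ outside the relevant walls. What remains is to see that $a_v$ is a smooth fibration onto $S\times\widehat{S}$. Since $a_v$ is surjective and equivariant for an action that is \emph{transitive} on the target, every fibre is carried isomorphically onto every other by an element of $S \times \widehat{S}$, so $a_v$ is \'etale-locally trivial, hence smooth with all fibres isomorphic. A dimension count (the fibres have dimension $\scal{v,v}-2$) together with the universal property then identifies $S \times \widehat{S}$ with $\Alb(\cM_H(v))$, realizing concretely the abstract structure of Proposition~\ref{prop:fib-alb}.

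For part (2), I would first argue that the fibre $K_H(v) = a_v^{-1}(\mathrm{pt})$ is a compact hyperk\"ahler manifold: it is smooth projective by part (1), it inherits a holomorphic symplectic form by restricting Mukai's symplectic form on $\cM_H(v)$ and checking nondegeneracy along the fibre, and one verifies that it is simply connected with $h^{2,0}=1$. The deformation-equivalence claim is the substantive point. Here I would connect $(v,H)$ to the model vector $v_0 = (1,0,-n)$ with $n = \tfrac{\scal{v,v}}{2}$, for which $\cM_H(v_0) \cong S^{[n]} \times \widehat{S}$ and $a_{v_0}$ is $\sigma_n \times \mathrm{id}$, so that its Albanese fibre is a fibre of the summation map $\sigma_n \colon S^{[n]} \to S$, i.e.\ the generalized Kummer variety $\Kum^{\scal{v,v}/2-1}(S)$ of the statement. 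One reaches this model through two types of moves that each preserve the deformation type of the Albanese fibre: wall-crossing in the ample cone to connect different chambers for $H$, and Fourier–Mukai transforms together with elementary modifications relating moduli spaces for different primitive $v$ of the same self-intersection. Assembling these into smooth proper families over connected bases and invoking invariance of deformation type then yields the asserted deformation-equivalence.

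The hardest part is the deformation-equivalence in (2): one must exhibit the chain of wall-crossings and Fourier–Mukai equivalences linking an arbitrary $(v,H)$ to the Hilbert-scheme model and, crucially, upgrade these birational-type relations to genuine deformations of the \emph{smooth} Albanese fibres, which requires controlling the loci of strictly semistable sheaves that appear on walls and ensuring the families stay smooth and proper. By contrast, the construction of $a_v$ and the fibration statement in (1) are comparatively formal once the $S\times\widehat{S}$-action and its compatibility with determinants under Fourier–Mukai are established.
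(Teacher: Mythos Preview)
The paper does not actually prove this theorem: its entire argument is the two-line observation that the Albanese morphism is a smooth fibration by Kawamata's general structure theorem for minimal varieties of Kodaira dimension zero \cite{Ka}, and that everything else is contained in Theorems~0.1 and~0.2 of \cite{Yo}. Your proposal, by contrast, sketches the substance of Yoshioka's argument itself---the $S\times\widehat{S}$-action, the determinant/Fourier--Mukai description of the Albanese map, and the deformation to the Hilbert-scheme model via wall-crossings and Fourier--Mukai equivalences. So you are supplying much more than the paper does, and what you outline is broadly faithful to Yoshioka's approach.

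One genuine difference worth noting: for the smoothness of the fibration in (1), the paper invokes \cite{Ka} (the Albanese map of any smooth projective variety with numerically trivial canonical bundle is \'etale-locally trivial), which is a black-box structural result. You instead argue directly from equivariance under the transitive $S\times\widehat{S}$-action, which is more hands-on and closer to how Yoshioka actually sets things up. Both routes work; the paper's is shorter because the structural input has already been recorded, while yours makes the geometry explicit.

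A small gap to watch: you appeal to Theorem~\ref{thm:MY} for $\overline{\cM_H(v)}=\cM_H(v)$, but that theorem as stated requires $H$ to avoid a locally finite set of walls, whereas part~(1) here asserts compactness for \emph{every} ample $H$ (with $v$ primitive positive and $\scal{v,v}\geq 6$). This sharper statement is part of what Yoshioka proves in \cite{Yo}, so if you are reconstructing the argument rather than citing it, you need to address arbitrary $H$ directly rather than importing the generic-$H$ statement.
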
 
\begin{proof}
The Albanese morphism is a smooth fibration by \cite{Ka}; all other statements are contained in Theorems 0.1 and 0.2 of \cite{Yo}. 
\end{proof}

\begin{prop}\label{prop:fix-det}
	Assume the hypotheses of Theorem \ref{thm:Yoshioka}, and let $\cQ \in {\rm Pic}^{v_{2}}(S)$ be given.  Then the Albanese map of $\cM_{H}(v,\cQ)$ is a smooth fibration over an abelian surface isogenous to $S.$
\end{prop}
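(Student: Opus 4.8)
The plan is to realize $\cM_{H}(v,\cQ)$ as a fibre of the determinant morphism and then transport the fibration structure supplied by Theorem~\ref{thm:Yoshioka} to it. First I would invoke Theorem~\ref{thm:Yoshioka} to present the Albanese morphism of $\cM_{H}(v)$ as a smooth fibration $a \colon \cM_{H}(v) \to S \times \widehat{S}$ whose fibres are the compact hyperk\"ahler manifolds $K_{H}(v)$, and whose $\widehat{S}$-component is, under the torsor identification $\Pic^{v_{2}}(S) \cong \widehat{S}$, the determinant map $\det \colon \cM_{H}(v) \to \Pic^{v_{2}}(S)$. Since $\cM_{H}(v,\cQ) = \det^{-1}(\cQ) = a^{-1}(S \times \{\cQ\})$, restricting the fibration $a$ over the submanifold $S \times \{\cQ\} \hookrightarrow S \times \widehat{S}$ shows that $p \deq a|_{\cM_{H}(v,\cQ)} \colon \cM_{H}(v,\cQ) \to S$ is again a smooth fibration, with connected fibres $p^{-1}(s) = a^{-1}(s,\cQ) \cong K_{H}(v)$.

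Next I would check that $\cM_{H}(v,\cQ)$ has trivial canonical class, so that the structure theory of Kawamata (Section~8 of \cite{Ka}) applies. Indeed $\det$ is smooth, being the composite of the smooth fibration $a$ with the projection $S \times \widehat{S} \to \widehat{S}$, and $\widehat{S}$ has trivial canonical bundle, so adjunction along the smooth fibre gives $\omega_{\cM_{H}(v,\cQ)} \cong \omega_{\cM_{H}(v)}|_{\cM_{H}(v,\cQ)}$, which is trivial because $K_{\cM_{H}(v)} \cong \cO_{\cM_{H}(v)}$ by Theorem~\ref{thm:MY}. Consequently the Albanese map $\alb \colon \cM_{H}(v,\cQ) \to \Alb(\cM_{H}(v,\cQ))$ is an \'etale-trivial, hence smooth, fibration.

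It then remains to identify the Albanese target as an abelian surface isogenous to $S$, and here the map $p$ does the work. Each fibre $K_{H}(v)$ is simply connected, so $p_{\ast}\cO = \cO_{S}$ and $R^{1}p_{\ast}\cO = 0$; the Leray spectral sequence then yields an isomorphism $p^{\ast} \colon H^{1}(S,\cO_{S}) \xrightarrow{\sim} H^{1}(\cM_{H}(v,\cQ),\cO)$, whence $q(\cM_{H}(v,\cQ)) = 2$ and $\Alb(\cM_{H}(v,\cQ))$ is an abelian surface. By the universal property of the Albanese, $p$ factors as $p = g \circ \alb$ for a homomorphism $g \colon \Alb(\cM_{H}(v,\cQ)) \to S$ (after suitable choices of base point), and the isomorphism on $H^{1}(-,\cO)$ forces $g$ to be \'etale; since $p$, and hence $g$, is surjective, $g$ is an isogeny. (Connectedness of the fibres of $p$ in fact forces $\deg g = 1$, but the weaker isogeny statement is all that is required.)

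The main obstacle I anticipate is the bookkeeping in the first step: one must pin down that the $\widehat{S}$-factor of the Albanese of $\cM_{H}(v)$ is genuinely the one cut out by fixing the determinant, so that $\cM_{H}(v,\cQ)$ is exactly the horizontal slice $a^{-1}(S \times \{\cQ\})$ fibring over $S$, rather than the preimage of some other coset fibring over an \emph{a priori} unrelated abelian surface. This is precisely where Yoshioka's explicit description of the Albanese morphism in \cite{Yo} must be used carefully. Everything downstream---base change of the fibration, triviality of the canonical bundle, and the Leray computation of the irregularity---is then routine.
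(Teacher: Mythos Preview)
Your proposal is correct and follows essentially the same route as the paper: restrict the Albanese fibration of $\cM_{H}(v)$ to $\cM_{H}(v,\cQ)$ to obtain a map to $S$ with fibres $K_{H}(v)$, and then use simple connectedness of $K_{H}(v)$ to conclude. The paper's proof is considerably terser---it simply asserts that the image of $\cM_{H}(v,\cQ)$ under ${\rm alb}_{\cM_{H}(v)}$ is $S \times \{\eta\}$ for \emph{some} $\eta \in \widehat{S}$ (sidestepping the bookkeeping you flag about identifying $\eta$ with $\cQ$, which is indeed unnecessary for the conclusion) and then jumps directly from simple connectedness of $K_{H}(v)$ to the claim, leaving the verification of $K_{\cM_{H}(v,\cQ)} \cong \cO$, the Leray computation of the irregularity, and the Albanese factorisation implicit; your write-up supplies exactly these details.
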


\begin{proof}
	The image of $\cM_{H}(v,\cQ)$ under the Albanese morphism ${\rm alb}_{\cM_{H}(v)} : \cM_{H}(v) \to S \times \widehat{S}$ from (1) of Theorem \ref{thm:Yoshioka} is $S \times \{\eta\}$ for some $\eta \in \widehat{S},$ and so the restriction can be identified with a morphism $\nu : \cM_{H}(v,\cQ) \to S$ whose fibres are all isomorphic to $K_{H}(v),$ and therefore deformation-equivalent to ${\rm Kum}^{\frac{\langle{v,v}\rangle}{2}-1}(S)$ by (2) of Theorem \ref{thm:Yoshioka}.  Given that ${\rm Kum}^{\frac{\langle{v,v}\rangle}{2}-1}(S)$ is compact hyperk\"{a}hler, we have that $K_{H}(v)$ is simply connected, so the Albanese morphism of $\cM_{H}(v,\cQ)$ is necessarily a smooth fibration over an abelian surface isogenous to $S$ as claimed.  
\end{proof}

With this in hand, we obtain effective global generation statements for ample line bundles on the moduli spaces ${\cM}_H(v)$ and $\cM_{H}(v,\cQ).$

\begin{cor}\label{cor:kummer}
	Let $S$ be an abelian surface, $v = (r,c_1,{\rm ch}_{2})$  a primitive positive Mukai vector satisfying $\scal{v,v}\geq 6$, and $H$ a sufficiently general ample divisor on $S$. Then for a positive integer $k$ and an ample line bundle $\cL$ on ${\cM}_H(v)$ or $\cM_{H}(v,\cQ)$, the tensor power $\cL^{\otimes rs}$ is globally generated in codimension $k$ for $r \geq f_{K_{H}(v),k}$ and $s \geq f_{\rm alb_{X\ast}(\cL^{\otimes r}),k}.$  	
\end{cor}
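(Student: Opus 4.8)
The plan is to deduce the statement as a direct application of Theorem A, once we have checked that each moduli space satisfies its hypotheses and have identified the Fujita number of its Albanese fibration with $f_{K_H(v)}$. Recall that Theorem A applies to any smooth projective variety with numerically trivial canonical class and produces global generation of $\cL^{\otimes 2m}$ for $m \geq f_{{\rm alb}}$, where the maximum in $f_{{\rm alb}} = \max\{f_{X_a} : a \in {\rm Alb}(X)\}$ is taken over the closed fibres of the Albanese fibration.

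First I would treat $\cM_H(v)$. By Theorem \ref{thm:MY}, under the present hypotheses $\cM_H(v)$ is a smooth projective variety with $K_{\cM_H(v)} \cong \cO_{\cM_H(v)}$, so in particular $K_{\cM_H(v)} =_{\rm num} 0$ and Theorem A is available. By Theorem \ref{thm:Yoshioka}~(1) its Albanese morphism is a smooth fibration over $S \times \widehat{S}$, and by part (2) a closed fibre is the compact hyperk\"{a}hler manifold $K_H(v)$. Since the Albanese morphism of a variety with numerically trivial canonical class is \'{e}tale-trivial (Section 8 of \cite{Ka}), all of its fibres are isomorphic to $K_H(v)$; hence every $f_{X_a}$ equals $f_{K_H(v)}$ and the Fujita number of the fibration is exactly $f_{K_H(v)}$. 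Theorem A then yields that $\cL^{\otimes 2m}$ is globally generated for all $m \geq f_{K_H(v)}$.

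For the fixed-determinant space $\cM_H(v,\cQ)$ I would argue in the same way, but must first verify that it has numerically trivial canonical class. Realizing $\cM_H(v,\cQ)$ as the smooth fibre over some $\eta \in \widehat{S}$ of the composite ${\cM}_H(v) \xrightarrow{{\rm alb}} S \times \widehat{S} \to \widehat{S}$ (as in the proof of Proposition \ref{prop:fix-det}), its normal bundle is the trivial pullback of the tangent space $T_\eta\widehat{S}$, so adjunction gives $K_{\cM_H(v,\cQ)} \cong K_{\cM_H(v)}|_{\cM_H(v,\cQ)} =_{\rm num} 0$. Proposition \ref{prop:fix-det} identifies the Albanese map of $\cM_H(v,\cQ)$ with a smooth fibration over an abelian surface isogenous to $S$ whose fibres are again all isomorphic to $K_H(v)$, so its Fujita number is once more $f_{K_H(v)}$, and a second application of Theorem A completes the proof.

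The only genuinely delicate point is the identification $f_{{\rm alb}} = f_{K_H(v)}$: this requires that the Albanese fibration be \'{e}tale-trivial, so that every closed fibre is truly \emph{isomorphic} to $K_H(v)$ and not merely deformation-equivalent to a generalized Kummer variety, the Fujita number being a global-generation invariant that need not be preserved under deformation. This \'{e}tale-triviality is exactly the structural input recorded in Section \ref{sec:min-var}, so once it is in hand the corollary follows at once from Theorem A.
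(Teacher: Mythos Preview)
Your proposal is correct and follows essentially the same route as the paper, which records the proof as ``Immediate from Theorem~A, Theorem~\ref{thm:Yoshioka}, and Proposition~\ref{prop:fix-det}.'' Your write-up simply unpacks this line: you verify $K=_{\rm num}0$ for both moduli spaces (via Theorem~\ref{thm:MY} and adjunction respectively), invoke Theorem~\ref{thm:Yoshioka} and Proposition~\ref{prop:fix-det} to identify the Albanese fibres with $K_H(v)$, and correctly observe that \'etale-triviality of the Albanese map is what pins down $f_{\rm alb}=f_{K_H(v)}$ rather than merely a deformation invariant.
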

	
\begin{proof}
	Immediate from Theorem~A, Theorem~\ref{thm:Yoshioka}, and Proposition \ref{prop:fix-det}.
\end{proof}

We can improve upon this result by exploiting the special nature of the Beauville-Bogomolov factorization of $\cM_{H}(v),$ namely that we can take the isogeny to be a multiplication map.

\begin{thm}
	\label{thm:mod-sp}
	Let $S$ be an abelian surface, let $v$ be a Mukai vector satisfying ${\langle}v^{2}{\rangle} \geq 6,$ and let $H$ be a very general ample divisor on $S$.  Then for $0 \leq k \leq {\rm dim}~{\cM_{H}(v)}$ we have 
		\begin{equation*}
			f_{\cM_{H}(v),k} \leq \max\biggl\{f_{K_{H}(v),k},\frac{{\langle}v^{2}{\rangle}}{2}+1\biggr\}.
		\end{equation*}
\end{thm}

\begin{proof}
	For simplicity we write $n(v) := \frac{{\langle}v^{2}{\rangle}}{2}.$  From (4.10) and (4.11) in \cite{Yo} we have a Cartesian diagram
	\begin{equation}
		\xymatrix{
			&{K_{H}(v) \times \widehat{A} \times A} \ar[d]_{\overline{\sigma}_{n(v)}} \ar[r]^{\overline{\mu}_{n(v)}} &{{\cM}_{H}(v)} \ar[d]_{\sigma} \\
			&{\widehat{A} \times A} \ar[r]^{\mu_{n(v)}}			&{\widehat{A} \times A}
		}
	\end{equation}
	where $\sigma$ is the Albanese map of $\cM_{H}(v).$  We have that
	\begin{equation*}
		\overline{\sigma_{n(v)}}^{\ast}(\cL^{\otimes m}) \cong \cM^{\otimes m} \boxtimes \mathcal{N}^{\otimes m}
	\end{equation*}  
	
	where $\cM,\mathcal{N}$ are ample line bundles on $K_{H}(v), \widehat{A} \times A$ respectively.  We have that $K_{H}(v)$ is deformation equivalent to the generalized Kummer variety $K_{n(v)-1}$ by (1) of Theorem 0.2 in \cite{Yo}.  By this deformation equivalence, we have that any line bundle $\mathcal{N}$ on $K_{H}(v)$ which arises as a factor of such a pullback is numerically equivalent to an ample divisor of the form ${\mathcal{N}'}^{\otimes n(v)}$ for some ample line bundle ${\mathcal{N}'}$ on $K_{H}(v).$    
	
		\begin{equation*}
		\mu_{n(v)}^{\ast}\sigma_{\ast}(\cL^{\otimes m}) \cong \overline{\sigma_{n(v)\ast}}\overline{\mu_{n(v)}}^{\ast}(\cL^{\otimes m}) 
	\end{equation*}
	\begin{equation*}
		\cong \overline{\sigma_{n(v)\ast}}(\cM^{\otimes m} \boxtimes \mathcal{N}^{\otimes m}) \cong  \overline{\sigma_{n(v)\ast}}(\cM^{\otimes m} \boxtimes \mathcal{N}'^{\otimes mn(v)}) \cong H^{0}(K_{H}(v),\cM^{\otimes m}) \otimes \mathcal{N}'^{\otimes mn(v)}
	\end{equation*}
	
	By Corollary 2.3.6 and Lemma 4.6.2 in \cite{BL}, $\mu_{n}^{\ast}$ acts on ${\rm NS}(A)$ as multiplication by $n^{2}.$  Consequently we have 
	\begin{equation*}
		h^{0}(K_{H}(v),\cM^{\otimes m}) \cdot c_{1}(\mathcal{N}'^{\otimes mn(v)}) = c_{1}(\mu_{n}^{\ast}\sigma_{\ast}(\cL^{\otimes m})) = \mu_{n}^{\ast}c_{1}(\sigma_{\ast}(\cL^{\otimes m})) = n^{2} \cdot c_{1}(\sigma_{\ast}(\cL^{\otimes m}))
	\end{equation*}
	
	Since the rank of $\sigma_{\ast}(\cL^{\otimes m})$ is equal to $h^{0}(K_{H}(v),\cM^{\otimes m})$ we then have that
	
	\begin{equation*}
			\frac{c_{1}(\sigma_{\ast}(\cL^{\otimes m}))}{{\rm rk}~ \sigma_{\ast}(\cL^{\otimes m})} = \frac{c_{1}(\mathcal{N}'^{\otimes mn(v)})}{n^{2}} = \frac{mc_{1}(\mathcal{N}'^{\otimes n(v)})}{n}
	\end{equation*}
	
	By Proposition \ref{prop:pareschi-gg} we will have the global generation of $\sigma_{\ast}(\cL^{\otimes m})$ once we verify that $c_{1}(\sigma_{\ast}(\cL^{\otimes m}) \otimes \mathcal{N}'^{\vee})$ is ample.  We have from the previous calculation that
	
	\begin{equation*}
		\frac{c_{1}(\sigma_{\ast}(\cL^{\otimes m}) \otimes \mathcal{N}'^{\vee})}{{\rm rk}~ (\sigma_{\ast}(\cL^{\otimes m}) \otimes \mathcal{N}'^{\vee})} = \frac{c_{1}(\sigma_{\ast}(\cL^{\otimes m}) \otimes \mathcal{N}'^{\vee})}{{\rm rk}~ \sigma_{\ast}(\cL^{\otimes m})}
	\end{equation*}
	\begin{equation*}
		 = \frac{c_{1}(\sigma_{\ast}(\cL^{\otimes m}))}{{\rm rk}~ \sigma_{\ast}(\cL^{\otimes m})} - c_{1}(\mathcal{N}') = \biggl(\frac{m}{n(v)}-1\biggr) \cdot c_{1}(\mathcal{N}')
	\end{equation*}
	
	Since $m \geq n(v)+1$ by hypothesis, we have the necessary ampleness statement; in particular, $\sigma_{\ast}(\cL^{\otimes m})$ is globally generated, as claimed.  Moreover, since $m \geq f_{K_{H}(v),k}$ by hypothesis, the restriction of $\cL^{\otimes k}$ to any fiber of $\sigma$ is globally generated in codimension $k$, so the evaluation morphism
	\begin{equation*}
		\sigma^{\ast}\sigma_{\ast}(\cL^{\otimes m}) \to \cL^{\otimes m}
	\end{equation*}
	is surjective.  The global generation of $\cL^{\otimes m}$ in codimension $k$ follows at once. 
\end{proof}

We now collect some consequences of this result.  Applying Corollary 4.9 of \cite{Riess}, we have

\begin{cor}
	\label{cor:yoshioka-mov}
	In the setting of Theorem C, we have $f_{\cM_{H}(v),1} \leq \frac{\langle{v^{2}}\rangle}{2}+1.$   \hfill \qedsymbol 
\end{cor}

Also, setting $v = (1,0,-n)$ in Theorem C immediately yields

\begin{cor}
	\label{cor:ab-surf-hilb}
	Let $S$ be an abelian surface, let $n \geq 1$ be an integer, and let $S^{[n]}$ be the Hilbert scheme of length-$n$ subschemes of $S.$  If $K_{n-1}$ is the generalized Kummer variety of $S$ then 
	\begin{equation*}
		f_{S^{[n]},k} \leq \max\{n+1,f_{K_{n-1},k}\}
	\end{equation*}
	In particular, if $\cL$ is an ample line bundle on $S^{[2]}$ then $\cL^{\otimes m}$ is globally generated for all $m \geq 3.$	
\end{cor}

\section{Examples and Complements}
	\label{sec:ex-comp}

The material in this final section will hopefully illuminate, and point towards extensions of, the effective global generation phenomenon discussed earlier in the paper.  First we highlight the importance of our earlier minimality hypothesis on $X$. 

\begin{eg}
	\label{ex:ab-var-blowup}
	Let $A$ be an abelian variety of dimension $n \geq 2$, let $\phi : \widetilde{A} \to A$ be the blow-up of a point $p \in A$ with exceptional divisor $E \cong \P^{g-1},$ and let $L$ be an ample line bundle on $\widetilde{A}$ such that $L|_{E} \cong \cO_{E}(1).$  To see that such an $L$ exists, first consider that if $\phi' : \widetilde{\P^{N}} \to \P^{N}$ is the blow-up at $p' \in \P^{N}$ with exceptional divisor $E' \cong \P^{N-1},$ and $H'$ is the pullback of the hyperplane class on $\P^{N}$ via $\phi',$ then there exists $m' > 0$ such that $L' := m'H'-E'$ is ample and $L'|_{E'} \cong \cO_{E'}(1).$  It follows that if we take an embedding of $A$ in $\P^{N}$ which maps $p$ to $p'$, the desired line bundle $L$ on $\widetilde{A}$ can be taken as the restriction of $L'$ to $\widetilde{A}$ in the induced embedding $\widetilde{A} \subset \widetilde{\P^{N}}.$
	
	Since $K_{\widetilde{A}} = (n-1)E$ and $K_{\widetilde{A}}|_{E} \cong \cO_{E}(1-n)$, we have for all $m \geq 1$ that 
	\begin{equation}
	(K_{\widetilde{A}}+mL)|_{E} \cong \cO_{E}(m+1-n)
	\end{equation}
	hence for $K_{X}+mL$ to be globally generated one must have $m \geq n-1.$  In conclusion, $f_{\widetilde{A}} \geq n-1$ even though $\widetilde{A}$ has maximal Albanese dimension.
\end{eg}

\begin{eg}
	\label{ex:ab-var-f1}
	There exists for each $g \geq 2$ an abelian variety $A$ of dimension $g$ such that $f_{A}=1.$  By \cite{LFG} or \cite{DHS}, there exists a primitive polarization type $\mathbf{d}$ such that for a very general member $(A,L)$ of the moduli space $\cA_{g}(\mathbf{d})$, any line bundle on $A$ algebraically equivalent to $L$ is globally generated.  We can also take $(A,L)$ to be sufficiently general so that ${\rm NS}(A)$ is generated by $c_{1}(L).$  For all such $A,$ we have that $f_{A}=1$ as desired.
\end{eg}

\begin{eg}
	\label{ex:c-y}
	There exists for each odd integer $n \geq 3$ a Calabi-Yau manifold $X$ of dimension $n$ with $f_{X} \geq n+1.$  One such $X$ is a general smooth hypersurface of degree $2n+2$ in the weighted projective space $\P(1^{n},2,n+2)$; see Example 3.2 in 	\cite{Kawa} for details.
\end{eg}

\begin{eg}
	\label{ex:sm-hyp}
	If $X\subseteq\PP^{N}$ is a smooth hypersurface of degree $d \geq 2$ when $N \geq 4$, or a very general hypersurface of degree $d \geq 4$ when $N=3$, then ${\rm Pic}(X)$ is generated by $\cO_{X}(1)$ and $\omega_{X} \cong \cO_{X}(d-N-1)$; therefore 	$f_X=\max\{1,N+1-d\}$.  In particular, $f_X = 1$ for the Calabi-Yau case $d=N+1.$  
\end{eg}

We now estimate $f_X$ in some low-dimensional cases.

 \begin{eg} 
	If $\dim\, X=1$, then $\omega_X\otimes\cA^{\otimes 2}$ is globally generated for every ample line bundle $\cA$, so that $f_X\leq 2$. At the same time, if $P\in X$ is a point and $\cA=\cO_X(P)$ the divisor  $\omega_X\otimes\cA$ is never globally generated, hence $f_X=2$.  
\end{eg}

\begin{eg}
	When $\dim\, X=2,$ Reider's theorem implies $f_X \leq 3.$  However, if we assume further that $X$ is a minimal surface of Kodaira dimension 0, Reider's theorem implies $f_{X} \leq 2.$
\end{eg}

\begin{eg}
If $X$ is a surface which admits a smooth fibration $f\colon X \to C$ over a curve $C,$ the preceding examples imply that $f_X \dleq 3 < f_\pi\cdot f_C$.
\end{eg}

Based on these examples and Theorem A, we offer the following conjecture.

\begin{conj}[Fujita numbers in fibrations]\label{conj:Fujita in fibrations}
	Let $X,Y$ be  smooth projective varieties and   $\pi\colon X\to Y$ a smooth fibration. Then $f_X \dleq f_\pi\cdot f_Y.$ 
\end{conj}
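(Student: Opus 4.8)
The plan is to imitate the proof of Theorem~A, separating the fibre and base directions and measuring each by the appropriate Fujita number. Fix an ample line bundle $\cA$ on $X$ and an integer $m\ge f_\pi\cdot f_Y$; the goal is to show that $\omega_X\otimes\cA^{\otimes m}$ is globally generated. Using $\omega_X\cong\omega_{X/Y}\otimes\pi^\ast\omega_Y$, the idea is to globally generate $\omega_{X/Y}\otimes\cA^{\otimes m}$ relatively over $Y$, push the resulting data down to $Y$, and there invoke $f_Y$. Concretely, I would use the following variant of Lemma~\ref{lem:adj-surj}(2): if the relative evaluation map
\[
\pi^\ast\pi_\ast\bigl(\omega_{X/Y}\otimes\cA^{\otimes m}\bigr)\To \omega_{X/Y}\otimes\cA^{\otimes m}
\]
is surjective and $\omega_Y\otimes\pi_\ast(\omega_{X/Y}\otimes\cA^{\otimes m})$ is globally generated on $Y$, then tensoring the displayed surjection by $\pi^\ast\omega_Y$, and using that pullbacks of globally generated sheaves are globally generated, shows that $\omega_X\otimes\cA^{\otimes m}$ is globally generated.

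For the fibre direction, I would first verify that the displayed relative evaluation map is surjective once $m\ge f_\pi$. The smooth fibration $\pi$ restricts $\omega_{X/Y}\otimes\cA^{\otimes m}$ to $\omega_{X_y}\otimes(\cA|_{X_y})^{\otimes m}$ on each closed fibre $X_y$, and this is globally generated because $m\ge f_\pi\ge f_{X_y}$. Kodaira vanishing on the fibres, applied to $\omega_{X_y}$ twisted by the ample $(\cA|_{X_y})^{\otimes m}$, makes the fibrewise $h^0$ constant, so by cohomology and base change $\cE_m\deq\pi_\ast(\omega_{X/Y}\otimes\cA^{\otimes m})$ is locally free with fibre $H^0(X_y,\omega_{X_y}\otimes(\cA|_{X_y})^{\otimes m})$, and Nakayama's lemma upgrades fibrewise global generation to surjectivity of the relative evaluation map. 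At this point Berndtsson's Theorem~\ref{thm:Berndtsson} shows that $\cE_m$ carries a Nakano-positive Hermitian metric, hence is an ample vector bundle on $Y$.

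The remaining step---globally generating $\omega_Y\otimes\cE_m$ on the base---is where I expect the main obstacle to lie. The Fujita number $f_Y$ only governs $\omega_Y\otimes\cN^{\otimes m'}$ for \emph{line} bundles $\cN$, whereas $\cE_m$ is a genuinely higher-rank ample bundle, and nothing in the definition of $f_Y$ bounds the global generation of $\omega_Y$ twisted by an ample vector bundle. One natural attempt is to pass to $\mathbb{P}(\cE_m)$ with its tautological ample line bundle $\mathcal{O}(1)$, trading the vector bundle for a line bundle on a larger fibration over $Y$; but the fibres $\mathbb{P}^{r-1}$ have Fujita number $r=\rk\cE_m$, reintroducing an uncontrolled rank dependence instead of the clean factor $f_Y$. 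A second, more serious difficulty is the bookkeeping of the exponent: to extract the factor $f_Y$ one would like $\cE_m$ to arise as a symmetric power of a fixed ample bundle, but direct images do not commute with such powers---$\pi_\ast(\omega_{X/Y}\otimes\cA^{\otimes f_\pi f_Y})$ is \emph{not} $\Sym^{f_Y}\pi_\ast(\omega_{X/Y}\otimes\cA^{\otimes f_\pi})$, the discrepancy being precisely extra powers of $\omega_{X/Y}$. Overcoming this would require either a Fujita-type global generation statement for $\omega_Y$ twisted by ample (or Nakano-positive) vector bundles that is calibrated by $f_Y$, or a multiplication-map argument comparing $\cE_m$ with symmetric powers of a lower relative adjoint bundle.

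It is worth recording that the two unconditional cases behave well precisely because this obstacle evaporates. When $X$ is a surface the bound $f_X\le 3$ follows directly from Reider's theorem, bypassing the base analysis. When $\pi$ is a product projection $F\times Y\to Y$ with no non-trivial correspondences, every ample $\cA$ splits as $\cA_F\boxtimes\cA_Y$, so $\cE_m\cong H^0(F,\omega_F\otimes\cA_F^{\otimes m})\otimes\cA_Y^{\otimes m}$ is a globally generated vector space tensored by a line bundle, whence $\omega_Y\otimes\cE_m\cong(\omega_Y\otimes\cA_Y^{\otimes m})^{\oplus \dim H^0(F,\omega_F\otimes\cA_F^{\otimes m})}$ is globally generated for $m\ge f_Y$ by the very definition of $f_Y$; combined with $m\ge f_\pi$ this even yields the stronger bound $\max(f_\pi,f_Y)$. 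In both situations the higher-rank phenomenon either does not occur or splits off a line bundle, and a proof in general will have to confront the positivity of $\pi_\ast(\omega_{X/Y}\otimes\cA^{\otimes m})$ as an honest vector bundle while converting the line-bundle invariant $f_Y$ into the required multiplicative bound.
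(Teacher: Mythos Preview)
The statement you are attempting to prove is labeled and stated in the paper as a \emph{conjecture}; the paper offers no proof, and indeed remarks immediately afterward that even the special case $K_X =_{\rm num} 0$, $\pi = {\rm alb}_X$, $f_{{\rm Alb}(X)} = 1$ ``seems unlikely'' to be accessible with the methods developed there. So there is no paper proof to compare against.

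Your proposal is not a proof either, and you say so yourself. What you have written is a clear and honest outline of the natural strategy (push forward the relative adjoint bundle, then globally generate on the base) together with a correct identification of where it breaks: the Fujita number $f_Y$ is defined via ample \emph{line} bundles, while $\cE_m = \pi_\ast(\omega_{X/Y} \otimes \cA^{\otimes m})$ is a higher-rank ample vector bundle, and nothing in the definition of $f_Y$ controls global generation of $\omega_Y \otimes \cE_m$. Your observation that the product case works precisely because $\cE_m$ degenerates to copies of a line bundle, and your remark that passing to $\P(\cE_m)$ trades one uncontrolled quantity for another, are both to the point. This diagnosis is consistent with the paper's own closing comments. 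In short: you have not proved the conjecture, but you have correctly located the missing ingredient---a vector-bundle analogue of the Fujita bound calibrated by $f_Y$---and the paper does not supply it either.
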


\end{document}